%% file: main.tex
\documentclass[a4paper,11pt]{article}
\usepackage{geometry}

\usepackage{graphicx}
\usepackage{hyperref}
\usepackage{amsmath,amssymb,amsthm,mathrsfs}
\usepackage{float}
\usepackage{placeins}
\usepackage{yhmath}
\usepackage{tikz}
\usepackage{caption}
\usepackage{subcaption}
\usepackage{blkarray}
\usepackage{booktabs}   
\usepackage{multirow}
\usepackage{xcolor}
\usepackage{colortbl}

\numberwithin{equation}{section}

\input{tex_src/mynewcommands}
\usetikzlibrary{shapes.geometric}

\newtheorem{theorem}{Theorem}
\newtheorem{lemma}{Lemma}
\newtheorem{corollary}{Corollary}

\theoremstyle{definition}
\newtheorem{definition}{Definition}

\theoremstyle{remark}
\newtheorem*{remark*}{Remark}

\title{Spectral Reality for Certain Fourth-Order Nonsymmetric Finite-Difference Laplacians}
\author{Yizhe Feng\(^{1}\), Weiguo Gao\(^{1,2,3}\), and Meiyue Shao\(^{2,3}\)\cr \small
24110180014@m.fudan.edu.cn, \{wggao,myshao\}@fudan.edu.cn}
\date{
\(^1\)School of Mathematical Sciences, Fudan University, Shanghai 200433, China\\\(^2\)School of Data Science, Fudan University, Shanghai 200433, China
\\\(^3\)Shanghai Key Laboratory of Contemporary Applied Mathematics, Shanghai 200433, China}

\begin{document}
\maketitle
\input{tex_src/abstract}
\input{tex_src/introduction}
\input{tex_src/preliminary}
\input{tex_src/spec_analysis_BH}
\input{tex_src/spec_analysis_third_order_boundary}
\input{tex_src/conclusion}
\input{tex_src/references}
\input{tex_src/appendix}
\end{document}

%% file: tex_src/mynewcommands.tex
\newcommand*{\trans}{^\top}

\newcommand{\diag}{\mathrm{diag}}
\newcommand{\tridiag}{\operatorname{Tridiag}}

\newcommand{\fupp}{f_{\mathrm{U}}}
\newcommand{\flow}{f_{\mathrm{L}}}
\newcommand{\gupp}{g_{\mathrm{U}}}
\newcommand{\glow}{g_{\mathrm{L}}}
\newcommand{\mupp}{m_{\mathrm{U}}}
\newcommand{\mlow}{m_{\mathrm{L}}}
\newcommand{\nupp}{n_{\mathrm{U}}}
\newcommand{\nlow}{n_{\mathrm{L}}}
\newcommand{\vupp}{v_{\mathrm{U}}}
\newcommand{\vlow}{v_{\mathrm{L}}}
\newcommand{\wupp}{w_{\mathrm{U}}}
\newcommand{\wlow}{w_{\mathrm{L}}}
\newcommand{\xupp}{X_{\mathrm{U}}}
\newcommand{\xlow}{X_{\mathrm{L}}}
\newcommand{\yupp}{Y_{\mathrm{U}}}
\newcommand{\ylow}{Y_{\mathrm{L}}}
\newcommand{\zupp}{Z_{\mathrm{U}}}
\newcommand{\zlow}{Z_{\mathrm{L}}}
\newcommand{\aupp}{\alpha}
\newcommand{\alow}{\beta}
\newcommand{\R}{\mathbb{R}}

\newcommand{\stepa}{\textbf{Step~A}}
\newcommand{\stepb}{\textbf{Step~B}}
\newcommand{\bdL}{L}
\newcommand{\bdA}{A}
\newcommand{\bdC}{C}
\newcommand{\bdF}{F}
\newcommand{\bdT}{T}
\newcommand{\bdP}{P}
\newcommand{\bdS}{S}
\newcommand{\bdI}{I}
\newcommand{\bdR}{R}
\newcommand{\bdU}{U}
\newcommand{\bdQ}{Q}

\newcommand{\bdM}{M}
\newcommand{\bdb}{b}
\newcommand{\bde}{e}

\newcommand{\bdx}{x}
\newcommand{\bdy}{y}
\newcommand{\bdz}{z}
\newcommand{\bdzero}{0}
\newcommand{\bdlda}{\Lambda}

\newcommand{\sign}{\operatorname{sign}}
\newcommand{\keywords}[1]{\textbf{Keywords:} #1}
\newcommand{\AMS}[1]{\textbf{AMS subject classifications (2020).} #1}

%% file: tex_src/abstract.tex
\begin{abstract}
Finite-difference discretizations of Laplace operators yield discrete Laplacians whose spectral properties are closely tied to the stability, convergence, and physical fidelity of numerical solvers.
While symmetric discretizations are well understood, many high-order finite-difference schemes produce nonsymmetric matrices for which rigorous spectral analysises are overlooked.
Surprisingly, we prove that two fourth-order schemes with Dirichlet boundary conditions yield nonsymmetric discrete Laplacians whose spectra are real and strictly positive.
Computer-assisted computations show that this property in general does not hold for higher-order extensions of these schemes.

\keywords{Discrete Laplacian, finite-difference method,  nonsymmetric matrix, spectral analysises, eigenvalues}

\AMS{65N06, 15A18, 35J05}
\end{abstract}

%% file: tex_src/introduction.tex
\section{Introduction}

The Laplace operator plays a fundamental role in scientific computing.
Two classical problems governed by this operator are the Poisson
equation and the Laplace eigenvalue problem. 
For more information, see \cite{larssonPartialDifferentialEquations2003} and the references therein.
In this work, we focus on the latter problem under homogeneous Dirichlet boundary conditions:
\begin{equation}
    \label{eq:Laplace_eigenvalue_problem}
    -\Delta u(x) = \lambda u(x), \quad x \in \Omega,
    \qquad
    u(x) = 0, \quad x \in \partial\Omega.
\end{equation}
Under these boundary conditions, a key feature is that the negative Laplace operator \(-\Delta\) is well known to be positive definite. 
Various numerical methods have been developed to approximate its
eigenvalues and eigenfunctions, including the method of particular
solutions~(MPS)~\cite{foxApproximationsBounds1967}, finite-element methods
(FEM)~\cite{boffiFiniteElementApproximation2010,
sunFiniteElementMethods2016} and finite-difference methods
(FDM)~\cite{kuttlerFiniteDifferenceApproximations1970,
levequeFiniteDifferenceMethods2007}.


Finite-difference methods, including their high-order variants, have a long history of use in solving the Laplace eigenvalue problem, owing to their simplicity and efficiency~\cite{kuttlerFourthorderFinitedifferenceApproximation1971,kuttlerFiniteDifferenceApproximations1970}. 
Their structured discretizations can also be combined with fast algorithms such as the fast Fourier transform~(FFT)~\cite{brighamFastFourierTransform1988} in some suitable settings. 
Unlike standard FEM discretizations, whose discrete matrices are symmetric positive definite, high-order finite-difference schemes do not need to preserve this structure. In fact, they can produce nonsymmetric matrices, particularly when one-sided stencils are used near the boundary.
Despite their widespread use, the eigenvalue properties of these nonsymmetric matrices are often overlooked.

In this work, we identify a surprising spectral phenomenon in two fourth-order finite-difference schemes.
The Bramble--Hubbard scheme and a one-sided scheme produce  nonsymmetric discrete Laplacians, and all of their eigenvalues are real and strictly positive. 
We prove this property for both schemes. 
However, this property generally does not hold in higher-order extensions of the one-sided scheme. For certain orders and matrix dimensions, symbolic Sturm sequence computations~\cite{basuAlgorithmsRealAlgebraic2006} demonstrate the existence of nonreal conjugate eigenvalue pairs.\footnote{For every characteristic polynomial \(p\) considered, we verify exactly that \(\gcd(p,p')=1\).
Hence, \(p\) is square-free, and the Sturm count of distinct real roots equals the number of real roots counted with algebraic multiplicity.}

The paper is organized as follows. 
Section~\ref{sec:preliminaries} introduces the Laplace eigenvalue problem under consideration and reviews several finite-difference schemes for its discretization, including two fourth-order schemes, the Bramble--Hubbard scheme and a one-sided scheme, as well as a class of higher-order one-sided schemes. 
Sections~\ref{sec:spec_analysis_BH} and~\ref{sec:spec_analysis_third_order} present a detailed spectral analysis of the discrete Laplacians arising from the Bramble--Hubbard and the one-sided scheme, respectively.
We also remark that selected higher-order one-sided schemes may generate nonreal spurious eigenvalues.
Finally, Section~\ref{sec:conclusion} concludes the paper.

%% file: tex_src/preliminary.tex
\section{Preliminaries}
\label{sec:preliminaries}
In this section, we briefly introduce the discrete Laplacians generated by different finite-difference schemes applied to the Laplace eigenvalue problem~\eqref{eq:Laplace_eigenvalue_problem}.
For simplicity of notation, we consider the one-dimensional domain \(\Omega = (0,1)\) with a uniform grid \(0 = x_0 < x_1 < \dots < x_{n + 1} = 1\), where \(x_i = i/(n + 1)\), for \(0 \leq i \leq n + 1\) and the uniform grid spacing \(h = 1/(n + 1)\).
For a higher-dimensional hyperrectangle domain \(\Omega\), the corresponding discrete Laplacian can be constructed directly using the Kronecker sum; therefore, conclusions established in the one-dimensional case can easily be extended to higher dimensions.

\subsection{Fourth-Order FDM}
\label{subsec:fourth_order_FDM}
In this subsection, we introduce the fourth-order finite-difference scheme considered in this work.
For interior grid points $x_i$ with $2 \leq i \leq n-1$, a standard fourth-order central finite-difference approximation is adopted, that is
\begin{equation}
\label{eq:fourth_order_FDM}
    -\Delta_h u(x_i) = \frac{1}{12h^2} 
    \bigl(  u(x_{i-2}) 
        - 16u(x_{i-1}) 
        + 30u(x_{i})
        - 16u(x_{i+1})
        +   u(x_{i+2})
    \bigr).
\end{equation}
For near-boundary points, i.e., $i = 1$ and $i = n$, lower-order one-sided difference formulas are used to maintain consistency with the Dirichlet boundary conditions.
Under this setting, the discrete Laplacian generated by the fourth-order finite-difference scheme takes the following general matrix form:
\begin{equation}
\label{eq:discret_laplacian_general_form}
    \bdL = \frac{1}{12h^2}
    \begin{bmatrix}
        b_0 & b_1 & b_2 & b_3 \\
        a_{-1} & a_0 & a_1 & a_2      \\
        a_{-2} & a_{-1} & a_0 & a_1 & a_2 \\
               & a_{-2} & a_{-1} & a_0 & a_1 & a_2 \\
               & & \ddots & \ddots & \ddots & \ddots & \ddots \\
               & & & a_{-2} & a_{-1} & a_0 & a_1 & a_2 \\
               & & & & a_{-2} & a_{-1} & a_0 & a_1 \\
               & & & & b_3 & b_2 & b_1 & b_0
    \end{bmatrix} \in \R^{n \times n},
\end{equation}
where the matrix dimension \(n\) satisfies \(n \ge 5\).
The coefficients \(a_i\) are fixed and derived from the fourth-order central finite-difference scheme \eqref{eq:fourth_order_FDM}:
\begin{equation*}
    \bigl[a_{-2},\, a_{-1},\, a_0,\, a_1,\, a_2\bigr] 
    = \bigl[1,\; -16,\; 30,\; -16,\; 1\bigr].
\end{equation*}
We denote \( \bdb = [b_0,\, b_1,\, b_2,\, b_3] \) as the row vector collecting the effective coefficients in the first row of the discrete Laplacian~\eqref{eq:discret_laplacian_general_form}.
Several fourth-order finite-difference schemes considered in this study are summarized below.

\begin{itemize}
    \item \textbf{Bramble--Hubbard Scheme~\cite{brambleFormulationFiniteDifference1962,brambleFourthorderFiniteDifference1963}:}  
    In this scheme, the standard second-order central difference is applied at the near boundary point. 
    Specifically, for the first interior grid point,
    \begin{equation*}
        -\Delta_h u(x_1) = \frac{1}{h^2}\bigl(-u(x_0) + 2u(x_1) - u(x_2)\bigr).
    \end{equation*}
    Consequently, the corresponding coefficient vector \(\bdb\) is given by
    \begin{equation}
    \label{eq:Bramble--Hubbard scheme}
        \bdb = \bigl[24,\, -12,\, 0,\, 0\bigr].
    \end{equation}

    \item \textbf{Fourth-Order One-Sided Scheme~\cite{zhaoSpuriousSolutionsHighorder2007}:}  
    In this case, the standard second-order stencil near the boundary is replaced by a third-order finite-difference approximation. 
    Specifically, for the first interior grid point,
    \begin{equation*}
        -\Delta_h u(x_1) 
        = \frac{1}{12h^2}\bigl(-11u(x_0) + 20u(x_1) - 6u(x_2) - 4u(x_3) + u(x_4)\bigr).
    \end{equation*}
    Consequently, the first few stencil coefficients are modified as
    \begin{equation}
    \label{eq:Third-order near-boundary-accurate scheme}
        \bdb = \bigl[20,\, -6,\, -4,\, 1\bigr].
    \end{equation}
\end{itemize}

Figure~\ref{fig:fourth_order_eigenvalue_errors} plots the root mean square~(RMS) relative
errors of the minimal \(50\) eigenvalues of each scheme, matched with the exact Dirichlet eigenvalues \(\lambda_k=(k\pi)^2\).
Both curves show the fourth-order convergence as \(h=1/(n+1) \to 0\).
\begin{figure}[!tb]
    \centering
    \includegraphics[width=0.8\linewidth]{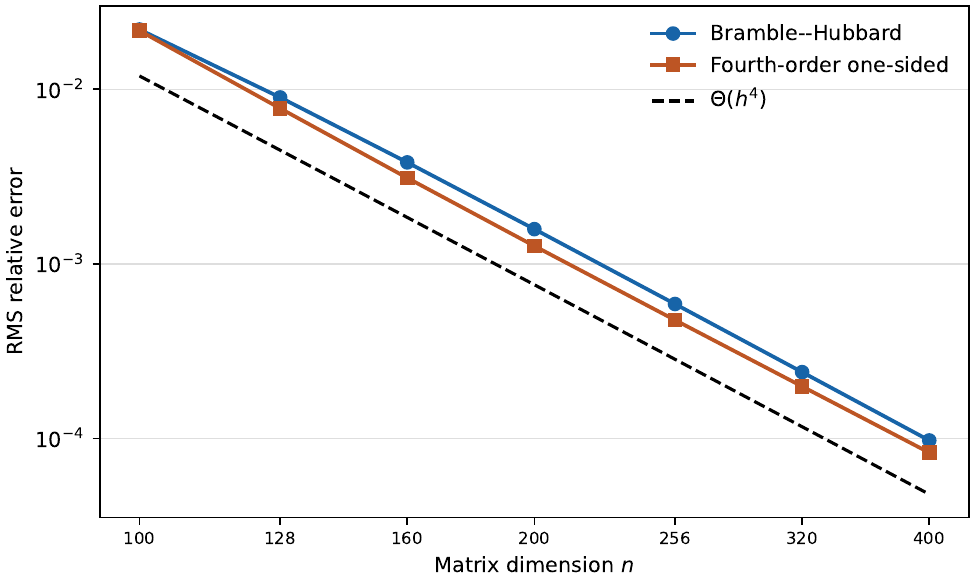}
    \caption{Eigenvalue errors for the Bramble--Hubbard and fourth-order
    one-sided schemes. The black dashed line indicates \(\Theta(h^4)\) scaling.}
    \label{fig:fourth_order_eigenvalue_errors}
\end{figure}


In this work, we establish the following theorems, which constitute the main theoretical results of this work.
\begin{theorem}
\label{thm:BH_real_positive}
    For every integer \(n \ge 5\), all eigenvalues of the discrete Laplacian constructed by the Bramble--Hubbard finite-difference scheme~\eqref{eq:Bramble--Hubbard scheme} are real and positive.
\end{theorem}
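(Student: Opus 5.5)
The plan is to reduce the eigenproblem to a scalar secular equation in a real angular parameter, and then to count its real roots. Write \(\bdA = 12h^2\bdL\) and \(\bdT = \tridiag(-1,2,-1)\), \(\bdM = \tridiag(-1,14,-1)\), both in \(\R^{n\times n}\). The interior symbol factors as
\[
30-32\cos\theta+2\cos2\theta = 4(1-\cos\theta)(7-\cos\theta),
\]
which matches the convolution \((-1,2,-1)*(-1,14,-1)=(1,-16,30,-16,1)\). Comparing rows then gives
\[
\bdA=\bdT\bdM+\bdE,
\]
where \(\bdE\) is nonzero only in its first and last rows. Its first row is \([-5,4,-1,0,\dots]\), and its last row is the reversal of that. \(\bdA\) is also centrosymmetric, \(\bdP\bdA\bdP=\bdA\) with \(\bdP\) the reversal matrix. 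So the spectrum splits into eigenvectors that are even or odd under \(\bdP\), and each class can be treated separately. At the end I will check that the two classes together contain \(n\) eigenvalues.

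Next I solve \(\bdA x=\mu x\) explicitly. The interior recurrence has characteristic equation \(z^{-2}-16z^{-1}+30-16z+z^2=\mu\). With \(c=(z+z^{-1})/2\) this becomes \(4(1-c)(7-c)=\mu\). For \(\mu\in(0,\,4\cdot 2\cdot 8)=(0,64)\), one root \(c_1=\cos\theta\) lies in \((-1,1)\) with \(\theta\in(0,\pi)\). The other root \(c_2=8-c_1\) is real and exceeds \(1\), so it corresponds to a real \(z_2=\rho>1\) with \(\rho+\rho^{-1}=2c_2\). I parametrize by \(\theta\). The even (respectively odd) candidate eigenvector about the center is then a combination of \(\cos((j-\tfrac{n+1}{2})\theta)\) and \(\cosh((j-\tfrac{n+1}{2})\log\rho)\) (respectively \(\sin\) and \(\sinh\)). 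Imposing the boundary row of \(\bdA\) (the modified first row \([24,-12,0,0]\)) and the second row, equivalently the ghost-point conditions, gives a \(2\times 2\) linear system. Its determinant \(D_\pm(\theta)\) is a real continuous function on \([0,\pi]\), and every zero \(\theta^*\) of \(D_\pm\) in \((0,\pi)\) yields the real eigenvalue \(\mu=4(1-\cos\theta^*)(7-\cos\theta^*)>0\). Since \(\mu(\theta)\) is strictly increasing on \((0,\pi)\), distinct zeros give distinct eigenvalues.

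The main obstacle is the counting step. I must show that \(D_+\) and \(D_-\) together have at least \(n\) distinct zeros in \((0,\pi)\), namely \(\lceil n/2\rceil\) and \(\lfloor n/2\rfloor\). This forces all \(n\) eigenvalues to be real and positive. I would first divide by the dominant \(\cosh\)/\(\sinh\) factor in \(\rho\), so that \(D_\pm\) becomes a leading term like \(\alpha(\theta)\cos(\tfrac{n+1}{2}\theta+\phi(\theta))\) plus a correction of size \(O(\rho^{-(n-1)})\). I would then evaluate signs at the points \(\theta_k=k\pi/(n+1)\) or their half-shifts, where the leading term alternates in sign, and show that the correction, together with the smooth phase \(\phi\), cannot flip these signs. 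The intermediate value theorem then gives the zeros. Near \(\theta\to 0\) and \(\theta\to\pi\) the amplitude \(\alpha\) may degenerate, so the first and last sign checks will need separate treatment. Small \(n\), starting from \(n=5\), may need a direct check, since \(\rho\ge 7+\sqrt{48}\) makes the correction small but not necessarily negligible for short grids.
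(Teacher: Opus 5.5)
Your setup is sound. Your $TM$ is exactly the paper's $S$: with $K=\tridiag(1,0,1)$ one has $T=2I-K$ and $M=14I-K$, hence $TM=K^2-16K+28I$. Your $E$ is the paper's rank-two correction. Reducing the problem on $(0,64)$ to zeros of $D_\pm$ on $(0,\pi)$, and then counting $\lceil n/2\rceil+\lfloor n/2\rfloor$ independent eigenvectors, is a valid strategy.

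The gap is that the counting step, which carries the whole content of the theorem, is never carried out. You only outline an asymptotic argument: a leading term $\alpha\cos(\tfrac{n+1}{2}\theta+\phi)$, a correction $O(\rho^{-(n-1)})$, unspecified control of the phase $\phi$, separate endpoint treatment, and direct checks for small $n$. None of the uniform estimates this would need are given. The stated premise is also off. At the points $\theta_k=k\pi/(n+1)$, $D_+$ has the same sign at $\theta_{2i-1}$ and $\theta_{2i}$, so the signs do not alternate at consecutive points.

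The missing idea is to evaluate exactly rather than asymptotically. Your ghost conditions are $x_0=0$ and $x_{-1}+6x_1-4x_2+x_3=0$. With $\sigma=(n+1)/2$ and $\kappa=\log\rho$, the even determinant is
\[
D_+(\theta)=\cos(\sigma\theta)\,B(\kappa)-\cosh(\sigma\kappa)\,\mathrm{Re}\bigl(e^{i\sigma\theta}W(\theta)\bigr),
\qquad
W(\theta)=e^{i\theta}+6e^{-i\theta}-4e^{-2i\theta}+e^{-3i\theta},
\]
where $B(\kappa)=\cosh((\sigma+1)\kappa)+6\cosh((\sigma-1)\kappa)-4\cosh((\sigma-2)\kappa)+\cosh((\sigma-3)\kappa)$.

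\textbf{Interior zeros.} Take $\gamma_i=(2i-1)\pi/(n+1)$, the angles of the eigenvalues $\mu_{2i-1}$ of $S$. There $\cos(\sigma\gamma_i)=0$, so the $\rho$-dependent term vanishes identically. The identity $5\sin\gamma-4\sin2\gamma+\sin3\gamma=16\sin\gamma\sin^4(\gamma/2)$ then gives $D_+(\gamma_i)=(-1)^i\,16\cosh(\sigma\kappa)\sin\gamma_i\sin^4(\gamma_i/2)$. This is exactly the positivity of the paper's weights $\alpha_i$; it corresponds to evaluating $\fupp$ at $t_i=\mu_{2i-1}$. It yields $\nupp-1$ zeros with no asymptotics.

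\textbf{Endpoint zero.} The remaining zero lies in $(0,\gamma_1)$. It needs the analogue of the paper's sign check at $z=0$, which there is the bound $\sum_i\alpha_i/\mu_{2i-1}<1$. In your variables it reads $D_+(0)=B(\kappa_0)-4\cosh(\sigma\kappa_0)>0$ with $\cosh\kappa_0=7$. This follows from $B=2\cosh\kappa\cosh(\sigma\kappa)+5\cosh((\sigma-1)\kappa)-4\cosh((\sigma-2)\kappa)+\cosh((\sigma-3)\kappa)$ together with $\sigma\ge3$.

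\textbf{Odd class.} The argument is the same with $\sin$ and $\sinh$, the points $2j\pi/(n+1)$, and $D_-(\theta)/\theta$ at the left end.

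With these two facts your route closes for every $n\ge5$, with no small-$n$ verification. It then becomes a genuine alternative to the paper's Sylvester/secular-equation argument. It replaces the eigendecomposition of $S$ and the trigonometric sum identity with an elementary hyperbolic inequality. As written, however, the proposal does not prove the theorem.
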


\begin{theorem}
\label{thm:third_order_real_positive}
    For every integer \(n \ge 5\), all eigenvalues of the discrete Laplacian constructed by the fourth-order one-sided finite-difference scheme~\eqref{eq:Third-order near-boundary-accurate scheme} are real and positive.
\end{theorem}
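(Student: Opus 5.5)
The plan is to reduce the eigenvalue problem to a scalar secular equation in a spectral angle, and then to count its real roots. We want to find \(n\) real roots; since \(L\) has only \(n\) eigenvalues, no nonreal ones can then remain. We use the variable \(\mu = 12h^2\lambda\), so that \(12h^2\bdL u=\mu u\).

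\textbf{Step 1: rewrite the boundary rows as ghost-point conditions.} Extend \(u\) by ghost values \(u_0,u_{-1}\) at the left and \(u_{n+1},u_{n+2}\) at the right. We then require the interior recurrence
\begin{equation*}
u_{j-2}-16u_{j-1}+(30-\mu)u_j-16u_{j+1}+u_{j+2}=0
\end{equation*}
for all \(1\le j\le n\). Subtracting the Toeplitz row \([30,-16,1,0]\) from \(\bdb=[20,-6,-4,1]\) shows that the first row of \(\bdL\) is equivalent to two conditions: \(u_0=0\), and
\begin{equation*}
u_{-1}-5u_0+10u_1-10u_2+5u_3-u_4=0.
\end{equation*}
The second condition says that the fifth forward difference vanishes, i.e.\ the ghost value is quartic extrapolation. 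The same holds mirrored at the right end. So the spectrum of \(\bdL\) is exactly the set of \(\mu\) for which this four-term recurrence, with two boundary conditions at each end, has a nontrivial solution.

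\textbf{Step 2: solve the recurrence in closed form.} The symbol factors as \(30-32c+2(2c^2-1)-\mu=4c^2-32c+28-\mu\) with \(c=\cos\theta\). Hence for a given \(\mu\), the two values of \(c\) are \(c_1\) and \(c_2=8-c_1\). For \(\mu\in(0,64)\) we may take \(c_1=\cos\theta\) with \(\theta\in(0,\pi)\). Then \(c_2>7\), giving an oscillatory pair \(e^{\pm i\theta}\) and a real pair \(w^{\pm1}\) with \(w+w^{-1}=2c_2\), \(w>13\).

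\(\bdL\) is centrosymmetric (\(J\bdL J=\bdL\) with \(J\) the flip). Thus we may look separately for eigenvectors symmetric and antisymmetric about the center. Write
\begin{itemize}
\item the symmetric ansatz as \(u_j=\alpha\cos((j-\tfrac{n+1}{2})\theta)+\beta\cosh((j-\tfrac{n+1}{2})\log w)\);
\item the antisymmetric ansatz with \(\sin\) and \(\sinh\) in place of \(\cos\) and \(\cosh\).
\end{itemize}
Imposing the two left conditions then gives a \(2\times2\) determinant \(D_\pm(\theta)=0\) in each parity class. The right conditions follow automatically by symmetry.

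After dividing by the dominant \(w^{(n+1)/2}\) factor, \(D_\pm\) has the form
\begin{equation*}
D_\pm(\theta)=P(\theta)\,\mathrm{trig}_\pm\bigl(\tfrac{n+1}{2}\theta+\phi(\theta)\bigr)+O(w^{-n}),
\end{equation*}
where \(\mathrm{trig}_+=\cos\), \(\mathrm{trig}_-=\sin\), and \(P\) and the phase shift \(\phi\) come from the fifth-difference condition applied to the two modes.

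\textbf{Step 3: count sign changes (the main obstacle).} We would evaluate \(D_\pm\) on a grid of \(\theta\) values in \((0,\pi)\) and show that it alternates in sign, producing about \(n/2\) roots in each parity class. If roots are missing near \(\theta=\pi\) (\(\mu\) near \(64\)), we would pick them up by continuing into \(\mu\ge 64\). There \(c_1\le -1\), both modes are real, and \(D_\pm\) becomes a real function of a real parameter whose sign can be checked at \(\mu=64\) and as \(\mu\to\infty\).

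The difficulty is making this uniform in \(n\ge5\). First, the remainder \(O(w^{-n})\) with \(w>13\) must be bounded explicitly; it is tiny for large \(n\), while small \(n\) (say \(n\le n_0\)) would be settled by exact Sturm-sequence counts, as in the introduction. Second, \(P(\theta)\) must be shown not to vanish, and \(\phi\) must be controlled so that consecutive grid points really straddle roots.

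Once \(n\) distinct real roots are exhibited, all eigenvalues are real and simple.

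\textbf{Step 4: positivity.} For \(\mu\le0\), all four characteristic roots are real and positive (\(c_1\ge1\)), so \(u_j\) is a combination of \(\cosh/\sinh\) modes. We would then show directly that the boundary determinant has a fixed nonzero sign for \(\mu\le0\), using monotonicity in the growth rates. This excludes nonpositive real eigenvalues, so every eigenvalue of \(\bdL\) is real and strictly positive, as claimed in Theorem~\ref{thm:third_order_real_positive}.
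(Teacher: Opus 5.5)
\paragraph{Verdict: genuine gap.}
Your Steps~1--2 are a sound reformulation. The first row of $\bdL$ is indeed equivalent to $u_0=0$ plus a vanishing fifth difference, and centrosymmetry reduces the problem to one $2\times2$ determinant per parity class. But the argument stops exactly where the theorem begins. Step~3 is only announced: you never fix the grid, never show $P(\theta)\neq0$, give no bound on $\phi$, and leave $n_0$ unspecified. Step~4 is likewise only a promise. Producing $n$ real roots \emph{is} the content of the statement, so the proposal does not establish the theorem.

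Moreover, the obvious grid fails. Take the unperturbed angles of the \emph{same} class; for the symmetric class these are $\theta=(2i-1)\pi/(n+1)$, where $\cos(\tfrac{n+1}{2}\theta)=0$. There one gets $\sign D_+=(-1)^{i-1}\sign\cos(3\theta/2)$. This is because the fifth-difference condition applied to the sine mode equals $16\sin^4(\theta/2)\sin\theta\,(1-2\cos\theta)$, so the alternation breaks at $\theta=\pi/3$. That is precisely the paper's transition index $\mupp=\lfloor (n+4)/6\rfloor$, where two eigenvalues share one interval between consecutive poles $t_i=\mu_{2i-1}$. The paper repairs this with three further ingredients:
\begin{enumerate}
\item an extra evaluation at the midpoint of $(t_{\mupp},t_{\mupp+1})$, with the bound $\Phi_{\mathrm U}<1$ obtained from the local estimate $<1/10$ and the identity $\sum_i\sin^4(\gamma_i/2)\sin^2\gamma_i=5(n+1)/64$;
\item a multiplicity argument when a pole is itself a root ($n\equiv2\pmod 6$ for the upper block, $n\equiv5\pmod 6$ for the lower);
\item exact computation for $n\le 7$.
\end{enumerate}

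\paragraph{How your framework could be closed.}
Evaluate each class at the \emph{other} class's angles. Let $N=\tfrac{n+1}{2}$, $s=\sin(\theta/2)$, $K=(w-1)^5/w^4$, and $E(u)=u_{-1}-5u_0+10u_1-10u_2+5u_3-u_4$, so that $E(x^j)=x^{-1}(1-x)^5$. After division by $\tfrac12 w^{N}$, the exact symmetric determinant is
\[
K\bigl(1-w^{3-2N}\bigr)\cos N\theta-\bigl(1+w^{-2N}\bigr)\,32s^5\sin\bigl(\tfrac32\theta-N\theta\bigr),
\]
and the antisymmetric one is
\[
K\bigl(1+w^{3-2N}\bigr)\sin N\theta-\bigl(1-w^{-2N}\bigr)\,32s^5\cos\bigl(\tfrac32\theta-N\theta\bigr).
\]
At $N\theta=k\pi$ and at $N\theta=(k+\tfrac12)\pi$, respectively, these equal $(-1)^k$ times
\[
K(1\mp w^{3-2N})-(1\pm w^{-2N})\,32s^5\sin\tfrac{3\theta}{2}.
\]
This factor is positive uniformly in $\theta\in[0,\pi]$ and $n\ge5$. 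Indeed, $K\ge w-5>8$ by Bernoulli's inequality, and $w^{3-2N}\le w^{-3}$. Also $32s^5\sin\tfrac{3\theta}{2}=32y^3(3-4y)\le 2187/512$ with $y=s^2$.

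The intermediate value theorem on $[0,\pi]$ then yields $\nupp$ symmetric and $\nlow$ antisymmetric roots with $\theta\in(0,\pi)$. These give $n$ eigenvalues in $(0,64)$ with independent eigenvectors. You would then need no continuation past $\mu=64$, no Step~4, and no small-$n$ exceptions. This would be a genuinely shorter route than the paper's secular-equation analysis, but none of it is in your proposal.
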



\subsection{Higher-Order Finite-Difference Schemes}
In this subsection, we present higher-order one-sided finite-difference schemes considered in~\cite{zhaoSpuriousSolutionsHighorder2007}.
This family generalizes the fourth-order one-sided scheme introduced in Section~\ref{subsec:fourth_order_FDM}, which is recovered by setting \(\ell=2\).
The discrete approximation at any grid point \(x_i\) (an interior point or near-boundary point) is expressed as
\begin{equation}
\label{eq:high_order_main_difference_scheme}
    -\Delta_h u(x_i) = \sum_{j=\mathrm{Low}_i}^{\mathrm{Upp}_i} c_{i,\,j}(x_i)\, u(x_j),
\end{equation}
where \(\mathrm{Low}_i\) and \(\mathrm{Upp}_i\) denote the lower and upper stencil indices, respectively, defined by
\begin{equation}
    \mathrm{Low}_i = \max(\min(i - \ell,\, n - 2\ell + 1),\, 0), \quad
    \mathrm{Upp}_i = \min(\max(2\ell,\, i + \ell),\, n + 1).
\end{equation}
These definitions apply for \(1 \le i \le n\) and \(n \ge 2\ell+1\).\footnote{The assumption \(n \ge 2\ell+1\) ensures that the \(2\ell+1\)-point stencil is well defined at every grid point.}
The coefficients \(c_{i,\,j}(x)\) correspond to the second derivative of the Lagrange interpolation kernel, defined for \(1 \le i \le n\) and
\(\mathrm{Low}_i \le j \le \mathrm{Upp}_i\) by
\begin{equation*}
    c_{i,j}(x) 
    = -\frac{{\rm d}^2}{{\rm d}x^2}
    \Biggl(\,\raisebox{0.8ex}{$\displaystyle\prod_{\substack{\mathrm{Low}_i\, \le\, k\, \le\, \mathrm{Upp}_i\\ k \ne j}}$}
    \,\frac{x - x_k}{x_j - x_k}\Biggr).
\end{equation*}
Hence, the discrete Laplace operator at each grid point is approximated using \(2\ell + 1\) neighboring points.
Under homogeneous Dirichlet boundary conditions, \(u(x_0)=u(x_{n+1})=0\); hence, the contributions associated with the boundary nodes \(x_0\) and \(x_{n+1}\) vanish and the corresponding columns are omitted from the discrete matrix.
The general nonzero stencil pattern of the resulting discrete Laplacian is shown in Figure~\ref{fig:higher_order_discrete_lap_scheme}. 

\begin{figure}[H]
    \centering
    \includegraphics[width=0.5\linewidth]{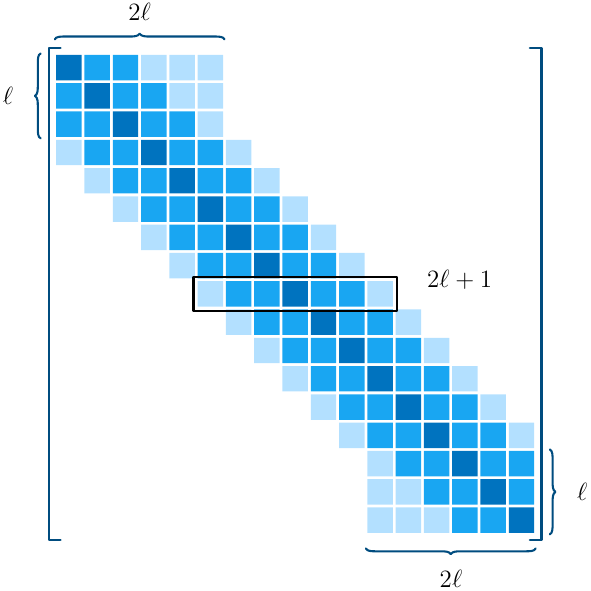}
    \caption{A schematic illustration of the general nonzero-stencil pattern for discrete Laplacians constructed by high-order one-sided finite difference schemes.}
    \label{fig:higher_order_discrete_lap_scheme}
\end{figure}

In this work, we further verify, for several orders of one-sided finite-difference schemes and selected matrix dimensions, that corresponding discrete Laplacians do not preserve the spectral property described in Section~\ref{subsec:fourth_order_FDM}. 
These results are confirmed through computer-assisted computations based on Sturm  sequence.

%% file: tex_src/spec_analysis_BH.tex
\section{Spectral Analysis of Bramble--Hubbard Scheme}
\label{sec:spec_analysis_BH}

In this section, we introduce our methodology for spectral analysis of the discrete Laplacian from Bramble--Hubbard scheme
\begin{equation}
\label{eq:discrete_Laplacian_matrix_BH}
L = 
\begin{bmatrix}
24 & -12 &  \\
-16 & 30 & -16 & 1 \\
1 & -16 & 30 & -16 & 1 \\
& \ddots & \ddots & \ddots & \ddots & \ddots & \\
& & 1 & -16 & 30 & -16 & 1 \\
& & & 1 & -16 & 30 & -16 \\
& & & & &-12 & 24
\end{bmatrix},
\end{equation}
where the leading coefficient $1/(12h^2)$ is omitted for notational simplicity.
This simplification does not affect the validity of our results.

Our approach consists of two main steps, referred to as \stepa\ and \stepb. 
\stepa\ computes the characteristic polynomial of the discrete Laplacian by employing the \emph{Wilkinson's similarity transformation} technique~\cite{wilkison}. 
\stepb\ then analyzes locations of  eigenvalues, equivalently, roots of the characteristic polynomial, by applying the intermediate-value theorem.

\subsection{\stepa: Computation for the Characteristic Polynomial}

A direct computation of the characteristic polynomial of \(\bdL\) is not straightforward and does not fully exploit its underlying structure.
To cover this issue, we we propose a novel method to derive the characteristic polynomial of the fourth-order discrete Laplacian.
A schematic overview of the procedure is provided in Figure~\ref{fig:schematic_overview_charac_poly_compute}.
\begin{figure}[!t]
    \centering
     \includegraphics[width=\linewidth]{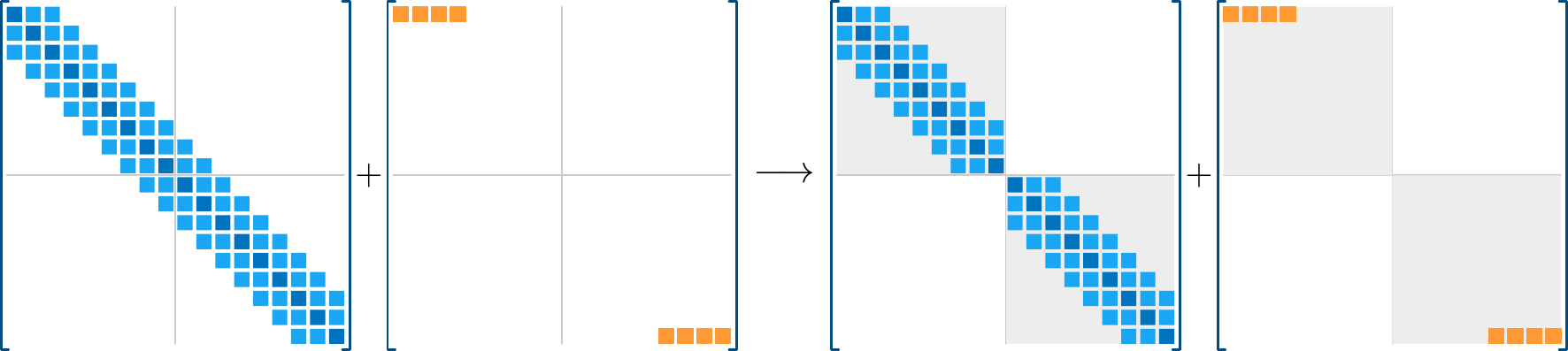}
    \caption{This is a schematic overview of the procedure for deriving the characteristic polynomial of the fourth-order discrete Laplacian.
    The discrete Laplacian is  decomposed into a symmetric pentadiagonal matrix $S$ together with a rank-two correction,  as shown  on the left.
    The symbol ``$\rightarrow$'' indicates the application of Wilkinson's similarity transformation to the decomposition, splitting the rank-two term into two rank-one terms, as shown on the right.}
    \label{fig:schematic_overview_charac_poly_compute}
\end{figure}

Motivated by perturbation theory, we introduce a symmetric pentadiagonal matrix 
\begin{equation}
\label{eq:S_test_matrix}
    \bdS =
    \begin{bmatrix}
        29 & -16 & 1 \\
        -16 & 30 & -16 & 1 \\
        1 & -16 & 30 & -16 & 1 \\
          & \ddots & \ddots & \ddots & \ddots & \ddots & \\
          &        & 1 & -16 & 30 & -16 & 1 \\
          &        &   & 1 & -16 & 30 & -16 \\
          &        &   &   & 1 & -16 & 29
    \end{bmatrix} \in \R^{n \times n}
\end{equation}
that serves as an approximation to the discrete Laplacian \(\bdL\), which is highly related to the two-step high-order compact (2SHOC) scheme \cite{caplanTwostepHighorderCompact2013}.
It admits the decomposition
\begin{equation*}
    \bdS = \bdT^2 - 16\bdT + 28\bdI,
\end{equation*}
where $ \bdT = \tridiag(1,0,1) \in \R^{n \times n}$ represents the tridiagonal Toeplitz matrix with the main diagonal elements set to $0$ and two sub-diagonal elements set to $1$.
Due to its well known eigendecomposition~\cite{noscheseTridiagonalToeplitzMatrices2013}, 
the approximate matrix \(\bdS\) can be orthogonally diagonalized as
\begin{equation}
\label{eq:S_spectrum_decomposition}
    \bdS = \bdQ \bdlda \bdQ\trans,
\end{equation}
where the orthogonal matrix \(\bdQ = [q_{i,j}]\) and the diagonal matrix $\bdlda= \diag(\mu_1, \dots, \mu_n)$ have entries, for \(i,j = 1,\dots,n\), given by
\begin{equation}
\label{eq:orthogonal_eigenmatrix_eigenvalue}
    q_{i,j} = \sqrt{\frac{2}{n+1}} \sin\frac{ij\pi}{n+1}, \quad \mu_i =  4\cos^2\!\frac{i\pi}{n+1} - 32\cos\frac{i\pi}{n+1} + 28.
\end{equation}
It is easy to verify that diagonal elements of $\bdlda$ are positive and arranged in ascending order. 

The discrete Laplacian \(\bdL\) can be regarded as a rank-two perturbation of \(\bdS\),
\begin{equation}
\label{eq:rank_two_BH}
    \bdL = \bdS + \bde_1 \bdz_1\trans + \bde_n \bdz_n\trans,
\end{equation}
where $\bde_i \in \R^n$ denotes the $i$-th column of the identity matrix and 
\begin{equation}
\label{eq:L-S_BH}
    \bdz_1 = \bigl[ -5, 4, -1, 0, \dots, 0\bigr]\trans, \quad
    \bdz_n = \bigl[0, \dots, 0, -1, 4, -5 \bigr]\trans.
\end{equation}

Consequently, directly deriving its characteristic polynomial from the eigendecomposition of \(\bdS\) remains challenging. 
To address this difficulty, we introduce the \emph{Wilkinson's similarity transformation} that exploits the centrosymmetric structure of \(\bdL\) and \(\bdS\), thereby further simplifying the spectral analysis.

\begin{definition}[Centrosymmetric matrix]
A matrix \(\bdM = [m_{i,j}] \in \R^{n \times n}\) is called \emph{centrosymmetric} if \(m_{i,j} = m_{n-i+1,\,n-j+1}\) for all \(1 \le i, \,j \le n\). 
Equivalently, it satisfies
\begin{equation*}
    \bdF \bdM \bdF = \bdM ,
\end{equation*}
where \(F = [e_n, e_{n-1}, \dots, e_2, e_1] \in \R^{n \times n}\) denotes the \emph{reverse-permutation} matrix.
\end{definition}

\begin{lemma}[\cite{weaverCentrosymmetricCrossSymmetricMatrices1985}]
\label{lemma:centro_block}
Let \(\bdM \in \R^{n \times n}\) be a centrosymmetric matrix.

\begin{enumerate}
\item If \(n = 2r\), then \(\bdM\) is orthogonally similar to the block form
\begin{equation}
\label{eq:lemma_centro_block_even}
\begin{aligned}
    & \bdP = \frac{1}{\sqrt{2}} 
    \begin{bmatrix}
        \bdI & \bdF \\
        -\bdF & \bdI
    \end{bmatrix},
    \qquad 
    \bdM = 
    \begin{bmatrix}
        \bdA & \bdF \bdC \bdF \\
        \bdC & \bdF \bdA \bdF
    \end{bmatrix}, \\
    & \hspace{1em} \bdP \bdM \bdP\trans = 
    \begin{bmatrix}
        \bdA + \bdF \bdC & \bdzero \\
        \bdzero & \bdF \bdA \bdF - \bdC \bdF
    \end{bmatrix},
\end{aligned}
\end{equation}
where \(\bdA, \bdC \in \R^{r \times r}\), and \(\bdF \in \R^{r \times r}\) is the reverse-permutation matrix.

\item If \(n = 2r + 1\), then \(M\) is orthogonally similar to the block form
\begin{equation}
\label{eq:lemma_centro_block_odd}
\begin{aligned}
    & \bdP = \frac{1}{\sqrt{2}} 
    \begin{bmatrix}
        \bdI & \bdzero & \bdF \\
        \bdzero & \sqrt{2} & \bdzero \\
        -\bdF & \bdzero & \bdI
    \end{bmatrix},
    \quad 
    \bdM = 
    \begin{bmatrix}
        \bdA & \bdx & \bdF \bdC \bdF \\
        \bdy\trans & q & \bdy\trans \bdF \\
        \bdC & \bdF \bdx & \bdF \bdA \bdF
    \end{bmatrix}, \\
    & \qquad 
    \bdP \bdM \bdP\trans = 
    \begin{bmatrix}
        \bdA + \bdF \bdC & \sqrt{2} \bdx & \bdzero \\
        \sqrt{2} \bdy\trans & q & \bdzero \\
        \bdzero & \bdzero & \bdF \bdA \bdF - \bdC \bdF
    \end{bmatrix},
\end{aligned}
\end{equation}
where \(\bdA, \bdC \in \R^{r \times r}\), \(\bdF \in \R^{r \times r}\) is the reverse-permutation matrix, and \(\bdx, \bdy \in \R^r\).
\end{enumerate}
\end{lemma}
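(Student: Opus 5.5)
The lemma is a direct block computation, so I will verify the conjugation \(\bdP \bdM \bdP\trans\) by multiplying out the blocks. First I check that \(\bdP\) is orthogonal, and then I carry out the product for the even and odd cases separately. The only facts needed are \(\bdF\trans = \bdF\), \(\bdF^2 = \bdI\), and the block structure that centrosymmetry forces on \(\bdM\).

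\textbf{Step 1: block structure of \(\bdM\).} For \(n=2r\), write \(\bdM = \begin{bmatrix} \bdA & \bdB \\ \bdC & \bdD \end{bmatrix}\). The reverse permutation of size \(n\) has the block form \(\begin{bmatrix} \bdzero & \bdF \\ \bdF & \bdzero \end{bmatrix}\). The condition \(\bdF_n \bdM \bdF_n = \bdM\) therefore reads \(\bdF \bdD \bdF = \bdA\) and \(\bdF \bdC \bdF = \bdB\), which gives \(\bdB = \bdF \bdC \bdF\) and \(\bdD = \bdF \bdA \bdF\), as displayed in the lemma. For \(n = 2r+1\), partition with a middle row and column. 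Centrosymmetry pairs the upper middle column \(\bdx\) with the lower one, so the lower one is \(\bdF\bdx\). Likewise the middle row blocks become \(\bdy\trans\) and \(\bdy\trans \bdF\), and the corner blocks behave as in the even case.

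\textbf{Step 2: orthogonality of \(\bdP\).} For the even case,
\(\bdP\bdP\trans = \tfrac12 \begin{bmatrix} \bdI & \bdF \\ -\bdF & \bdI \end{bmatrix} \begin{bmatrix} \bdI & -\bdF \\ \bdF & \bdI \end{bmatrix} = \tfrac12 \begin{bmatrix} 2\bdI & \bdzero \\ \bdzero & 2\bdI \end{bmatrix} = \bdI\),
using \(\bdF^2 = \bdI\). The odd case is the same computation; the middle entry contributes \((\sqrt2)^2/2 = 1\), and the cross terms vanish.

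\textbf{Step 3: the product \(\bdP \bdM \bdP\trans\).} In the even case, I first form
\(\bdM \bdP\trans = \tfrac{1}{\sqrt2} \begin{bmatrix} \bdA + \bdF\bdC & -\bdA\bdF + \bdF\bdC\bdF \\ \bdC + \bdA\bdF\cdot\bdF\ldots \end{bmatrix}\).
It is cleaner to carry out this multiplication blockwise. The \((1,1)\) block comes out as \(\tfrac12(\bdA + \bdF\bdC\bdF\bdF + \bdF\bdC + \bdF\bdF\bdA\bdF\bdF)\), which simplifies to \(\bdA + \bdF\bdC\). The \((2,2)\) block similarly becomes \(\bdF\bdA\bdF - \bdC\bdF\). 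The off-diagonal blocks cancel, for example \(\tfrac12(-\bdA\bdF + \bdF\bdC\bdF - \bdF\bdC\bdF + \bdA\bdF) = \bdzero\).

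In the odd case, the additional middle row and column give the following. The \((1,2)\) block is \(\tfrac{1}{\sqrt2}(\bdx + \bdF\bdF\bdx)\cdot\tfrac{\sqrt2}{\sqrt2}\), which equals \(\sqrt2\,\bdx\). The \((2,1)\) block is \(\sqrt2\,\bdy\trans\). The entries linking the middle index to the antisymmetric block vanish, since \(\bdx - \bdF\bdF\bdx = \bdzero\) and \(\bdy\trans\bdF - \bdy\trans\bdF = \bdzero\).

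The only real obstacle is sign and ordering bookkeeping, namely where the factors \(\bdF\) land in each product. Nothing conceptual is involved, since this is the standard symmetric/antisymmetric splitting induced by \(\bdF\).
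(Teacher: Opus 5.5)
Your proof is correct. Centrosymmetry forces the displayed block structure, $P$ is orthogonal because $F^\top = F$ and $F^2 = I$, and expanding $PMP^\top$ blockwise gives exactly the stated blocks in both parities. In the odd case, the first and last block rows and columns of $P$ have zero middle blocks, so the four corner blocks of $PMP^\top$ reduce to the even-case computation.

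The paper gives no proof of this lemma; it simply cites Weaver (1985). Your direct verification supplies the standard argument in its place. When you clean up Step 3, delete the abandoned partial product for $MP^\top$: its $(2,1)$ entry should be $C + FA$, not the garbled $C + AF\cdot F\ldots$.
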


Lemma~\ref{lemma:centro_block} reveals that, any centrosymmetric matrix admits an orthogonal similarity transformation that 
reduces it to a block diagonal form. 
It is observed that both $\bdL$ and $\bdS$ possess a centrosymmetric structure, 
and therefore their difference $\bdL - \bdS$ does as well. 
For \(n \geq 8\), they satisfy the block-diagonalization property described in 
Lemma~\ref{lemma:centro_block}:
\begin{align}
\label{eq:L^4_S_spectrum_decomposition_BH}
    & \bdP \bdL \bdP\trans = 
    \begin{bmatrix}
      \xupp & \bdzero \\
      \bdzero & \xlow
    \end{bmatrix},
    \quad
    \bdP \bdS \bdP\trans = 
    \begin{bmatrix}
      \yupp & \bdzero \\
      \bdzero & \ylow
    \end{bmatrix}, \\
\label{eq:L-S_spectrum_decomposition_BH}
    & \hspace{4em} 
    \bdP (\bdL - \bdS) \bdP\trans = 
    \begin{bmatrix}
      \zupp & \bdzero \\
      \bdzero & \zlow
    \end{bmatrix}.
\end{align}
For consistency of notation, we define \(\nupp = \lceil n / 2 \rceil\) and \(\nlow = \lfloor n / 2 \rfloor\), 
which represent the matrix dimensions of the upper block \(\xupp\) and the lower block \(\xlow\), respectively. 
From~\eqref{eq:rank_two_BH}, we know that the difference between \(\bdL\) and \(\bdS\) involves only a few entries located at the top-left and bottom-right corners. 
By combining the block forms in~\eqref{eq:lemma_centro_block_even} and~\eqref{eq:lemma_centro_block_odd}, 
we observe that the orthogonal similarity transformation leaves this difference invariant. 
Consequently, each block \(\zupp\) and \(\zlow\) can be expressed as
\begin{equation}
\label{eq:zupp_zlow_BH}
    \zupp = \vupp \,\wupp\trans, \quad 
    \zlow = \vlow \,\wlow\trans,
\end{equation}
where the vectors $\vupp,\, \wupp \in \R^{\nupp}$ 
are given by
\begin{equation}
\label{eq:f1fn_BH}
    \vupp = \bigl[1,\, 0,\, \dots,\, 0\bigr]\trans, 
    \quad 
    \wupp = \bigl[-5,\, 4,\, -1,\, 0,\, \dots,\, 0\bigr]\trans, 
\end{equation}
and the vectors $\vlow,\, \wlow \in \R^{\nlow}$ are given by
\begin{equation}
\label{eq:w1wn_BH}
    \vlow = \bigl[0,\, \dots,\, 0,\, 1\bigr]\trans,
    \quad
    \wlow = \bigl[0,\, \dots,\, 0,\, -1,\, 4,\, -5\bigr]\trans.
\end{equation}
Thus, these two matrices are of rank one.
Substituting~\eqref{eq:zupp_zlow_BH} into~\eqref{eq:L^4_S_spectrum_decomposition_BH} yields
\begin{equation}
\label{eq:two_rank_one_BH}
    \xupp = \yupp + \vupp \,\wupp\trans, 
    \quad 
    \xlow = \ylow + \vlow \,\wlow\trans.
\end{equation}

The rank-two correction \eqref{eq:rank_two_BH} can thus be decomposed into two rank-one corrections \eqref{eq:two_rank_one_BH}. 
This decomposition enables the characteristic polynomials of \(\xupp\) and \(\xlow\) to be computed separately 
by applying Sylvester’s determinant identity, 
together with the eigendecompositions of \(\yupp\) and \(\ylow\).

\begin{lemma}[Sylvester's determinant identity]
\label{lemma:sylvester}
    For any vector $v,\, w \in \R^n$, then \[\det(I + vw\trans) = 1 + v\trans w.\]
\end{lemma}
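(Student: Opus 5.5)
The plan is to reduce the identity to a determinant computation for a rank-one perturbation of the identity. Write \(\bdM = \bdI + v w\trans\) and look at how \(\bdM\) acts on \(\R^n\), separating the case \(w = \bdzero\) or \(v = \bdzero\) (trivial: \(\bdM = \bdI\), both sides equal \(1\)) from the generic case.

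For \(v \neq \bdzero\) and \(w \neq \bdzero\):

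1. Every vector \(x\) in the \((n-1)\)-dimensional subspace \(w^{\perp} = \{x : w\trans x = 0\}\) satisfies \(\bdM x = x\). So \(\bdM\) has the eigenvalue \(1\) with geometric multiplicity at least \(n-1\).
2. Apply \(\bdM\) to \(v\): \(\bdM v = v + v (w\trans v) = (1 + w\trans v)\, v\).
3. If \(v \notin w^{\perp}\), i.e.\ \(w\trans v \neq 0\), then \(v\) together with a basis of \(w^{\perp}\) forms a basis of \(\R^n\). In this basis \(\bdM\) is diagonal with entries \(1,\dots,1,1+w\trans v\). Hence \(\det \bdM = 1 + v\trans w\).
4. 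If \(w\trans v = 0\), the claim is \(\det \bdM = 1\). In this case \(\bdM = \bdI + N\) with \(N = v w\trans\), and \(N^2 = v (w\trans v) w\trans = \bdzero\). So \(N\) is nilpotent, all eigenvalues of \(\bdM\) equal \(1\), and \(\det\bdM = 1\).

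A shorter alternative, which I would probably present instead, avoids the case split. It uses the block factorization
\begin{equation*}
\begin{bmatrix} \bdI & \bdzero \\ w\trans & 1 \end{bmatrix}
\begin{bmatrix} \bdI + v w\trans & v \\ \bdzero & 1 \end{bmatrix}
\begin{bmatrix} \bdI & \bdzero \\ -w\trans & 1 \end{bmatrix}
=
\begin{bmatrix} \bdI & v \\ \bdzero & 1 + w\trans v \end{bmatrix}.
\end{equation*}
The outer factors are unit block triangular, so they have determinant \(1\). Taking determinants of both sides then gives \(\det(\bdI + v w\trans) = 1 + w\trans v\) immediately.

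The only real work is to check this product by direct multiplication. First, the middle factor times the right factor is
\begin{equation*}
\begin{bmatrix} \bdI + v w\trans - v w\trans & v \\ -w\trans & 1 \end{bmatrix}
= \begin{bmatrix} \bdI & v \\ -w\trans & 1 \end{bmatrix}.
\end{equation*}
Left-multiplying this by \(\begin{bmatrix} \bdI & \bdzero \\ w\trans & 1 \end{bmatrix}\) gives bottom row \(\bigl[\, w\trans - w\trans,\; w\trans v + 1 \,\bigr]\). This is exactly the block upper-triangular matrix above, with determinant \(1\cdot(1+w\trans v)\).

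I expect no genuine obstacle beyond this bookkeeping. Since \(v\trans w = w\trans v\) for real vectors, the identity follows as stated in Lemma~\ref{lemma:sylvester}.
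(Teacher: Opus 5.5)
The paper gives no proof of this lemma. It quotes it as the classical Sylvester identity, the rank-one case of $\det(I_n+AB)=\det(I_m+BA)$, so there is no argument in the paper to compare against, and your proof is correct.

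Your block product checks out entry by entry, and the two outer factors are unit block triangular. The middle factor is block upper triangular with diagonal blocks $I+vw^\top$ and $1$, so its determinant is $\det(I+vw^\top)$. You use this step implicitly when you take determinants of both sides; it is worth stating.

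Your first, eigenvector-based route is also correct, including the degenerate case $w^\top v=0$ handled via $N^2=0$.

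Of the two, the block factorization is the better one to present. It needs no case split and uses only multiplicativity of the determinant and the determinant of block triangular matrices. It therefore holds verbatim for vectors over $\mathbb{C}$, or over any commutative ring.

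That generality fits how the paper uses the lemma in \eqref{eq:xu_determinant_compute_BH}. There it is applied to the vectors $-(zI-\Lambda_1)^{-1}U_1^\top v_{\mathrm U}$ and $U_1^\top w_{\mathrm U}$, whose entries depend on $z$. The lemma as stated, for $v,w\in\R^n$, covers this only for real $z$ off the spectrum, plus a polynomial-identity argument. Your block argument covers it directly.
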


\begin{lemma}
\label{lemma:charac_poly_BH}
Let \(\bdL\) denote the discrete Laplacian \eqref{eq:discrete_Laplacian_matrix_BH} with matrix dimension $n \geq 5$, constructed by Bramble--Hubbard scheme, and let \(\xupp\) and \(\xlow\) denote the corresponding upper and lower diagonal blocks
in~\eqref{eq:L^4_S_spectrum_decomposition_BH}. 
Then the following results hold:
\begin{enumerate}
    \item The characteristic polynomial of \(\xupp\) can be expressed as 
    \begin{equation}
    \label{eq:fu_general_BH}
        \fupp(z) 
        = \Bigl(\,1 + \sum_{i=1}^{\nupp} 
        \frac{\aupp_i}{z - \mu_{2i-1}} \Bigr)
        \prod_{i=1}^{\nupp}(z - \mu_{2i-1}),
    \end{equation}
    where \(\mu_{2i-1}\) is defined in~\eqref{eq:orthogonal_eigenmatrix_eigenvalue},
    \(\theta = \pi/(n + 1)\), and the coefficients \(\aupp_i\)  are all positive and take the following forms: for \(1 \le i \le \nupp\),
    \begin{equation}
    \label{eq:bramble_hubbard_du}
        \aupp_i = 
        \frac{64}{n+1}
        \sin^4\!\frac{(2i-1)\pi}{2(n+1)}\,
        \sin^2\!\frac{(2i-1)\pi}{n+1}.
    \end{equation}
    
    \item The characteristic polynomial of \(\xlow\) can be expressed as 
    \begin{equation}
    \label{eq:fl_general_BH}
        \flow(z) 
        = \Bigl(\,1 + \sum_{j=1}^{\nlow} 
        \frac{\alow_j}{z - \mu_{2j}} \Bigr)
        \prod_{j=1}^{\nlow}(z - \mu_{2j}),
    \end{equation}
    where \(\mu_{2j}\) is defined in~\eqref{eq:orthogonal_eigenmatrix_eigenvalue},
    \(\theta = \pi/(n + 1)\), and the coefficients \(\alow_j\) are all positive and take the following forms: for \(1 \le j \le \nlow\),
    \begin{equation}
    \label{eq:bramble_hubbard_dl}
        \alow_j = 
        \frac{64}{n+1}
        \sin^4\!\frac{j\pi}{n+1}\,
        \sin^2\!\frac{2j\pi}{n+1}.
    \end{equation}

    \item The characteristic polynomial of \(\bdL\) can be written as 
    \begin{equation*}
        f_{\Delta}(z) = \fupp(z)\,\flow(z).
    \end{equation*}
\end{enumerate}

\end{lemma}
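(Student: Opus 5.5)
We would prove the three parts in the order (iii)-setup, (i), (ii), (iii). The orthogonal similarity of Lemma~\ref{lemma:centro_block} gives \(\det(zI-\bdL)=\det(zI-\xupp)\det(zI-\xlow)\), so part~3 follows once parts~1 and~2 identify these two factors with \(\fupp\) and \(\flow\).

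The first task is to identify the eigenstructure of \(\yupp\) and \(\ylow\). The eigenvectors \(q_k\) (columns of \(\bdQ\)) satisfy \(\bdF q_k=(-1)^{k+1}q_k\), since \(\sin\frac{(n+1-i)k\pi}{n+1}=(-1)^{k+1}\sin\frac{ik\pi}{n+1}\). Hence \(\bdP q_k\) is supported in the upper block when \(k\) is odd and in the lower block when \(k\) is even. We would check this directly from the form of \(\bdP\) in~\eqref{eq:lemma_centro_block_even}/\eqref{eq:lemma_centro_block_odd}. The upper block then has eigenpairs \((\mu_{2i-1},\hat q_{2i-1})\), where \(\hat q_{2i-1}=\sqrt2\,[q_{1,2i-1},\dots]\) consists of the first \(\nupp\) entries of \(\bdP q_{2i-1}\); the odd-\(n\) middle entry is handled the same way. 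The lower block has eigenpairs \((\mu_{2j},\hat q_{2j})\). The resulting matrices \(\hat Q_{\mathrm U},\hat Q_{\mathrm L}\) are orthogonal because \(\bdP\) and \(\bdQ\) are.

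For part~1, write \(zI-\xupp=(zI-\yupp)\bigl(I-(zI-\yupp)^{-1}\vupp\wupp\trans\bigr)\) for \(z\) not an eigenvalue of \(\yupp\). Lemma~\ref{lemma:sylvester} then gives
\[
\fupp(z)=\prod_{i=1}^{\nupp}(z-\mu_{2i-1})\Bigl(1-\wupp\trans(zI-\yupp)^{-1}\vupp\Bigr),
\]
and expanding the resolvent in the eigenbasis yields
\[
\aupp_i=-(\wupp\trans\hat q_{2i-1})(\vupp\trans\hat q_{2i-1}).
\]
The identity extends to all \(z\) by polynomial continuity. It remains to evaluate the two factors. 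With \(k=2i-1\) and \(\phi=k\theta\), we have \(\vupp\trans\hat q_k=\sqrt{2}\,q_{1,k}=\frac{2}{\sqrt{n+1}}\sin\phi\). We also need \(\wupp\trans\hat q_k=\frac{2}{\sqrt{n+1}}(-5\sin\phi+4\sin2\phi-\sin3\phi)\), which requires \(n\) large enough that indices \(1,2,3\) lie in the upper block with no folding. For small \(n\) (\(5\le n\le7\)) the blocks overlap the corners, and we would verify those cases separately, or note that the fold still produces the same formula.

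The trigonometric simplification is \(-5\sin\phi+4\sin2\phi-\sin3\phi=-\sin\phi\,(5-8\cos\phi+4\cos^2\phi-1)=-4\sin\phi(1-\cos\phi)^2=-16\sin\phi\sin^4(\phi/2)\). This gives
\[
\aupp_i=\frac{4}{n+1}\cdot16\sin^2\phi\,\sin^4(\phi/2),
\]
which matches~\eqref{eq:bramble_hubbard_du} and is manifestly positive because \(0<\phi<\pi\). Part~2 is identical with \(k=2j\): the signs from \(\vlow,\wlow\) located at the bottom, combined with \(\bdF q_k=-q_k\) under the \(\bdP\) transformation, cancel in the product, and the formula becomes~\eqref{eq:bramble_hubbard_dl}.

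We expect the main obstacle to be bookkeeping rather than a conceptual difficulty. It lies in tracking exactly how \(\bdP\) maps \(\bde_1,\bde_n,\bdz_1,\bdz_n\) and the \(q_k\) into the blocks, including the \(\sqrt2\) factors, the odd-\(n\) middle row, and the signs in the lower block. The small cases \(n=5,6,7\), where \(\bdz_1\) and \(\bdz_n\) interact with the centre, also need separate checking; we would settle these directly, possibly by direct computation.
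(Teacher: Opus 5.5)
Your proposal is correct and follows essentially the same route as the paper. Both arguments use the same three steps:
\begin{enumerate}
\item Block eigendecompositions of $\yupp,\ylow$ from the centrosymmetric transformation. You obtain these from the parity $Fq_k=(-1)^{k+1}q_k$, whereas the paper computes $PQR$ explicitly in the appendix.
\item Sylvester's determinant identity applied to each rank-one correction.
\item The simplification $-5\sin\phi+4\sin2\phi-\sin3\phi=-16\sin\phi\,\sin^4(\phi/2)$.
\end{enumerate}

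The paper also settles $n\in\{5,6,7\}$ by direct exact computation. In fact only $n=5$ involves folding through the middle entry, and there the fold still reproduces the same $\aupp_i$ and $\alow_j$, as you suspected.
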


\begin{proof}
For \(n \in \{5,6,7\}\), the stated characteristic-polynomial identities are verified directly by exact computation.
We therefore assume \(n \ge 8\) for the remainder of the proof.

We only provide the proof of part~1, as the proof of part~2 is analogous, and part~3 follows directly from~\eqref{eq:L^4_S_spectrum_decomposition_BH}.
From \eqref{eq:S_spectrum_decomposition} and \eqref{eq:L^4_S_spectrum_decomposition_BH}, the matrix admits the following factorization:
\begin{equation}
\label{eq:yu_yl_BH}
    \begin{bmatrix}
        \yupp & 0 \\
        0 & \ylow 
    \end{bmatrix} = 
    (\bdP \bdQ) \bdlda (\bdP \bdQ)\trans
    = (\bdP \bdQ \bdR) (\bdR\trans \bdlda \bdR)  (\bdP \bdQ \bdR)\trans,
\end{equation}
where 
\begin{equation}
\label{eq:permutation_matrix_R}
    \bdR = [e_1, e_3, \dots, e_{2\nupp-1}, e_2, e_4, \dots, e_{2\nlow}]
\end{equation}
is a permutation matrix that is also orthogonal, and \(e_i\) denotes the \(i\)-th column of the \(n \times n\) identity matrix.
This ensures that the matrix \begin{equation}
    \bdR\trans \bdlda \bdR = \diag(\mu_1, \mu_3, \dots, \mu_{2\nupp-1}, \mu_2, \mu_4, \dots, \mu_{2\nlow})
\end{equation}
remains diagonal.
By straightforward computation, the orthogonal matrix \( \bdP \bdQ \bdR \) satisfies the two-block diagonalization property, which we denote as
\begin{equation}
\label{eq:PQR_participate_BH}
    \bdP \bdQ \bdR = \begin{bmatrix}
        \bdU_1 & 0 \\
             0 & \bdU_2
    \end{bmatrix},
\end{equation}
where \( \bdU_1 \in \R^{\nupp \times \nupp} \) and \( \bdU_2 \in \R^{\nlow \times \nlow} \) are orthogonal (for specific computing details, refer to Appendix).
Moreover, the diagonal matrix \( \bdR\trans \bdlda \bdR \) can also be partitioned into two diagonal blocks, whose sizes match those of \( \bdU_1 \) and \( \bdU_2 \), denoted by \( \bdlda_1 \) and \( \bdlda_2 \), where
\begin{equation}
\label{eq:lda1_lda2_BH}
    \Lambda_1 = \diag(\mu_1, \mu_3, \dots, \mu_{2\nupp-1}),\quad \Lambda_2 = \diag(\mu_2, \mu_4, \dots, \mu_{2\nlow}).
\end{equation}
Thus, \eqref{eq:yu_yl_BH} provides the eigendecompositions of both \( \yupp \) and \( \ylow \) simultaneously:
\begin{equation}
\label{eq:yu_yl_eigendecomposition_BH}
    \yupp = \bdU_1 \bdlda_1 \bdU_1\trans, \quad
    \ylow = \bdU_2 \bdlda_2 \bdU_2\trans.
\end{equation}

Using the eigendecomposition of \(\yupp\), the characteristic polynomial \(\fupp\) of \(\xupp\) can be derived:
\begin{equation}
\label{eq:xu_determinant_compute_BH}
\begin{aligned}
    \fupp(z) &= \det(zI - \xupp) \\
             &= \det(zI - \yupp - \vupp \,\wupp\trans) \\
             &= \det(zI - U_1 \Lambda_1 U_1\trans - \vupp \,\wupp\trans) \\
             &= \det(zI - \Lambda_1 - U_1\trans \vupp \,\wupp\trans U_1) \\
             &= \det(zI - \Lambda_1) 
                \cdot \det\Bigl(I - (zI -\Lambda_1)^{-1}U_1\trans \vupp \,\wupp\trans U_1\Bigr) \\
             &= \det(zI - \Lambda_1) 
                \cdot \Bigl(1 - \wupp \trans U_1(zI-\Lambda_1)^{-1}U_1\trans \vupp \Bigr).
\end{aligned}
\end{equation}
The second and third equalities rely on \eqref{eq:two_rank_one_BH} and \eqref{eq:yu_yl_eigendecomposition_BH}, respectively.
The last equality is derived using the Sylvester’s determinant identity (see Lemma~\ref{lemma:sylvester}).
The proof is finished by
\begin{equation}
    \begin{aligned}
        \wupp\trans U_1(zI-\Lambda_1)^{-1}U_1\trans \vupp &=-\frac{4}{n+1} \sum_{i=1}^{\nupp} \frac{\bigl(5\sin\gamma_i - 4\sin(2\gamma_i)  + \sin(3\gamma_i)\bigr)\sin\gamma_i}{z-\mu_{2i-1}} \\
        &=-\frac{64}{n+1} \sum_{i=1}^{\nupp} \frac{\sin^4(\gamma_i/2) \sin^2\gamma_i }{z - \mu_{2i-1}},
    \end{aligned}
\end{equation}
where $\gamma_i$ is denoted as $(2i-1)\pi/(n+1)$ for $1 \leq i \leq \nupp$.
\end{proof}

According to Lemma~\ref{lemma:charac_poly_BH}, the eigenvalues of the blocks $\xupp$ and $\xlow$ are precisely the zeros of the rational functions
\begin{equation}
\label{eq:rational_func}
    \gupp(z) = 1 + \sum_{i=1}^{\nupp} 
        \frac{\aupp_i}{z - \mu_{2i-1}},
    \quad 
    \glow(z) = 1 + \sum_{j=1}^{\nlow} 
        \frac{\alow_j}{z - \mu_{2j}},
\end{equation}
whose graphs are shown in Figure~\ref{fig:BH_rational}.
Figure~\ref{fig:BH_rational} shows that the zeros of $\gupp(z)$ lie respectively in the intervals $(0, \mu_1)$ and $(\mu_{2i-1}, \mu_{2i+1})$ for $1 \leq i \leq \nupp-1$.
Similarly, the zeros of $\glow(z)$ lie respectively in the intervals $(0, \mu_2)$ and $(\mu_{2j}, \mu_{2j+2})$ for $1 \leq j \leq \nlow-1$.
This property will be established rigorously in the next subsection.

\begin{figure}[!tb]
\centering
\begin{minipage}{0.48\linewidth}
    \centering
    \includegraphics[width=\linewidth]{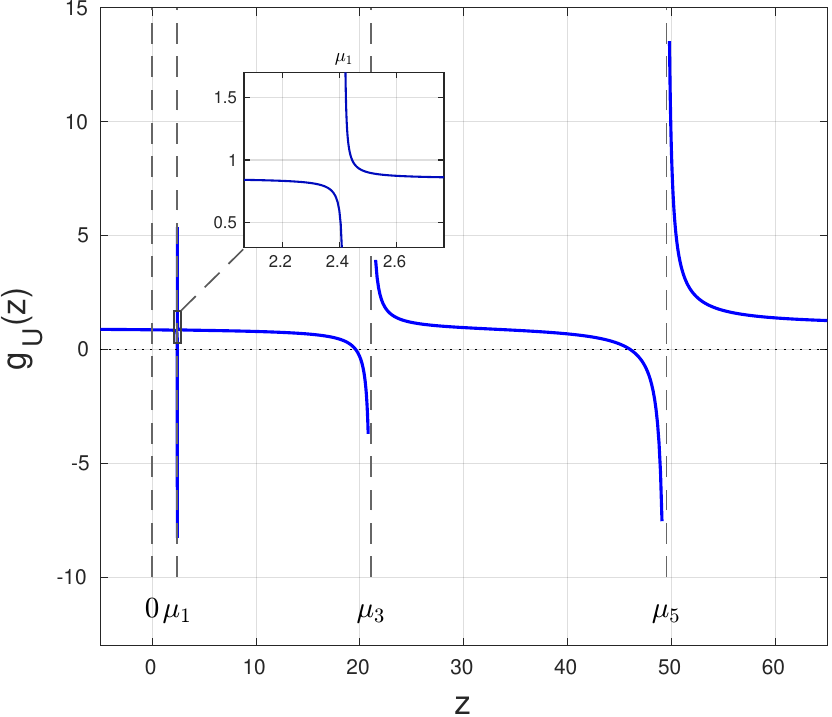}
    \subcaption{Upper rational function, $n = 6$.}
\end{minipage}
\hfill
\begin{minipage}{0.48\linewidth}
    \centering
    \includegraphics[width=\linewidth]{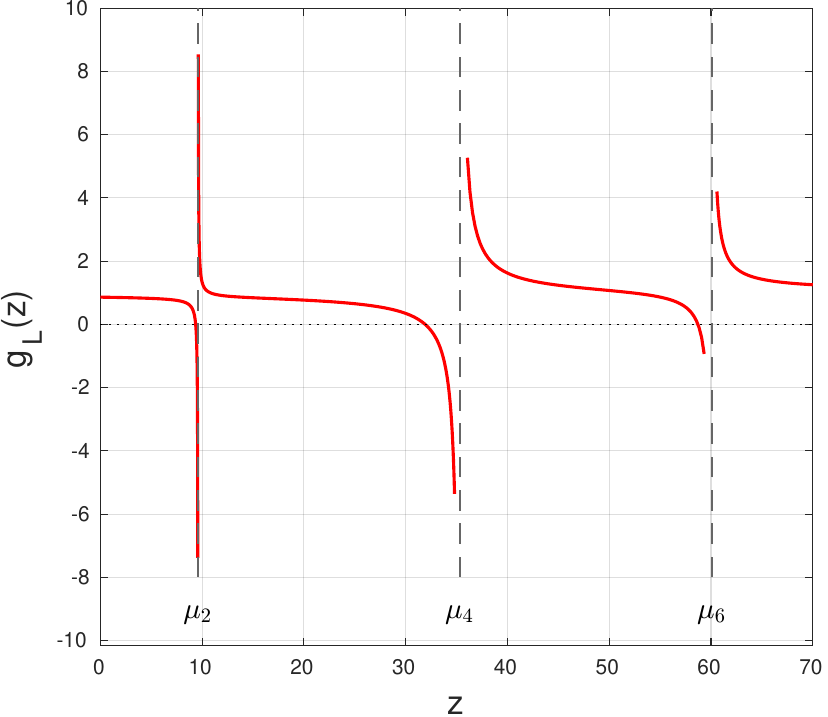}
    \subcaption{Lower rational function, $n = 6$.}
\end{minipage}
\caption{Graphs of the rational functions (a) $\gupp(z)$ and (b) $\glow(z)$ for $n = 6$.}
\label{fig:BH_rational}
\end{figure}

\subsection{\stepb: Spectral Analysis Based on the Characteristic Polynomial}
In this subsection, we rigorously analyze the distribution of the eigenvalues along the positive real axis by applying the intermediate value theorem to the characteristic polynomials of $\xupp$ and $\xlow$, by the Bramble--Hubbard scheme.
Main results are summarized in the following theorem.

\begin{theorem}
\label{thm:BH_fufl}
Let \(\bdL\) denote the discrete Laplacian \eqref{eq:discrete_Laplacian_matrix_BH} with matrix dimension \(n \ge 5\), constructed using the Bramble--Hubbard scheme
defined in~\eqref{eq:Bramble--Hubbard scheme}.\footnote{We note that the factor $1/(12h^2)$ has been omitted from \(\bdL\).}
Let \(\xupp\) and \(\xlow\) denote the corresponding upper and lower diagonal blocks in 
\eqref{eq:L^4_S_spectrum_decomposition_BH}. 
Then the following statements hold:
\begin{enumerate}
    \item The eigenvalues of \(\xupp\) are distributed over the intervals \((0,\, \mu_1)\)
    and \((\mu_{2i-1},\, \mu_{2i+1})\) for \(1 \le i \le \nupp-1\),
    with each interval containing exactly one eigenvalue, 
    where \(\mu_i\) is defined in~\eqref{eq:orthogonal_eigenmatrix_eigenvalue}.
    
    \item The eigenvalues of \(\xlow\) are distributed over the intervals \((0,\, \mu_2)\)
    and \((\mu_{2i},\, \mu_{2i+2})\) for \(1 \le i \le \nlow-1\),
    with each interval containing exactly one eigenvalue, 
    where \(\mu_i\) is defined in~\eqref{eq:orthogonal_eigenmatrix_eigenvalue}.
\end{enumerate}
\end{theorem}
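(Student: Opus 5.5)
We use Lemma~\ref{lemma:charac_poly_BH}: the eigenvalues of \(\xupp\) are the roots of \(\fupp(z)=\gupp(z)\prod_{i=1}^{\nupp}(z-\mu_{2i-1})\), and similarly for \(\xlow\). The argument is given for part~1 only; part~2 is identical, with \(\mu_{2j}\) and \(\alow_j\) in place of \(\mu_{2i-1}\) and \(\aupp_i\).

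\textbf{Step 1: the poles are simple.} We first check that the \(\mu_i\) are distinct and increasing. With \(c=\cos(i\pi/(n+1))\in(-1,1)\), we have \(\mu_i = 4c^2-32c+28\). The derivative in \(c\) is \(8c-32<0\), so \(\mu_i\) decreases in \(c\). Since \(c\) decreases in \(i\), the sequence \(\mu_i\) is strictly increasing in \(i\). Moreover \(\mu_i> 4-32+28=0\). Hence \(0<\mu_1<\mu_3<\cdots<\mu_{2\nupp-1}\), and \(\fupp\) has no common factor issues: \(\fupp(\mu_{2k-1}) = \aupp_k\prod_{i\ne k}(\mu_{2k-1}-\mu_{2i-1})\). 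By \eqref{eq:bramble_hubbard_du}, \(\aupp_k>0\), because \(0<(2k-1)\pi/(n+1)<\pi\) makes both sine factors nonzero. So \(\mu_{2k-1}\) is not a root of \(\fupp\).

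\textbf{Step 2: sign changes between consecutive poles.} The sign of \(\fupp(\mu_{2k-1})\) is \((-1)^{\nupp-k}\), since exactly \(\nupp-k\) factors are negative. Consecutive poles therefore give opposite signs. By the intermediate value theorem, each interval \((\mu_{2k-1},\mu_{2k+1})\) with \(1\le k\le \nupp-1\) contains a root, which accounts for \(\nupp-1\) roots.

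\textbf{Step 3: the root in \((0,\mu_1)\).} We compare \(\fupp(\mu_1)\), of sign \((-1)^{\nupp-1}\), with \(\fupp(0)=(-1)^{\nupp}\det\xupp\). We need \(\fupp(0)\) to have sign \((-1)^{\nupp}\), which is equivalent to \(\gupp(0)>0\), i.e.
\[
\sum_{i=1}^{\nupp}\frac{\aupp_i}{\mu_{2i-1}}<1 .
\]
This inequality is the main obstacle, and we would attack it with an explicit bound as follows.

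Write \(t=\gamma/2\) with \(\gamma=\gamma_i\). Then \(\sin^4 t\,\sin^2\gamma = 4\sin^6 t\cos^2 t\).

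For the denominator, \(\mu = 4(1-c)(7-c)\) with \(1-c=2\sin^2 t\) and \(7-c = 6+2\sin^2 t\). So \(\mu = 8\sin^2 t\,(6+2\sin^2 t)\ge 48\sin^2 t\).

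Combining,
\[
\frac{\aupp_i}{\mu_{2i-1}} \le \frac{64\cdot 4\sin^6 t\cos^2 t}{(n+1)\,48\sin^2 t} = \frac{16}{3(n+1)}\sin^4 t\cos^2 t .
\]
The sum over odd indices of \(\sin^4 t\cos^2 t\), i.e.\ \(\tfrac18\sin^2\gamma\,(1-\cos\gamma)\), can be computed exactly with the standard trigonometric sums over \(\gamma_i=(2i-1)\pi/(n+1)\). Those sums give roughly \((n+1)/16\), so the total is about \(1/3<1\).

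A cruder estimate should also suffice. Using \(\sin^4 t\cos^2 t\le 4/27\), the sum is at most \(\nupp\cdot 64/(81(n+1))\le 64/81<1\) for all \(n\), because \(\nupp\le (n+1)/2\) gives at most \(32/81\). That bound is already comfortably below \(1\), and we would use it.

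\textbf{Step 4: counting.} Steps 2 and 3 locate \(\nupp\) roots in disjoint intervals. Since \(\deg\fupp=\nupp\), these are all the eigenvalues, each is simple, and each lies in its stated interval. For \(n\in\{5,6,7\}\) the formulas of Lemma~\ref{lemma:charac_poly_BH} still hold, so the same argument applies verbatim.

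For part~2, the argument goes through verbatim with \(t=j\pi/(n+1)\), because \(\alow_j\) has the same structure as \(\aupp_i\). The factor is \(\sin^4\tfrac{\gamma}{2}\sin^2\gamma\) with \(\gamma=2j\pi/(n+1)\).
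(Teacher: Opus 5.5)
Your proof is correct and follows the paper's argument. The alternating signs of $\fupp(\mu_{2i-1})=\aupp_i\prod_{j\ne i}(\mu_{2i-1}-\mu_{2j-1})$ give the $\nupp-1$ roots between consecutive poles, and the root in $(0,\mu_1)$ comes from $\gupp(0)=1-\sum_i\aupp_i/\mu_{2i-1}>0$; your intermediate bound $\frac{16}{3(n+1)}\sum_i\sin^4 t_i\cos^2 t_i$ is the same as the paper's $\frac{4}{3(n+1)}\sum_i\sin^2\frac{\gamma_i}{2}\sin^2\gamma_i$.

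The only difference is the last step. You close with the termwise bound $\sin^4 t\cos^2 t\le 4/27$, giving a total of at most $32/81$, whereas the paper uses the exact identity $\sum_i\sin^2\frac{\gamma_i}{2}\sin^2\gamma_i=\frac{n+1}{8}$ from the appendix, giving a total below $\frac16$. Your bound is valid and avoids that lemma. Incidentally, your heuristic figures should be $\frac{n+1}{32}$ and $\frac16$, not $\frac{n+1}{16}$ and $\frac13$; this is harmless since you do not rely on them.
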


\begin{proof}
We present the proof for the first part concerning the matrix~\(\xupp\); the proof for the second part, corresponding to~\(\xlow\), follows analogously.
We choose the grid points
\begin{equation}
\label{eq:xu_grid_point_BH}
    t_i = \mu_{2i-1}, \qquad 1 \le i \le \nupp,
\end{equation}
which form a strictly increasing, positive sequence. 
The value of the function $\fupp$ at each grid point $t_i$ is given by
\begin{equation}
\label{eq:xu_function_value_BH}
    \fupp(t_i) = \aupp_i \prod_{j \ne i} (t_i - t_j).
\end{equation}
Since the coefficients $\aupp_i$ defined in~\eqref{eq:bramble_hubbard_du} are strictly positive, 
\begin{equation}
   \sign(\fupp(t_i)) = \sign\Bigl(\,\prod_{j \ne i} (t_i - t_j)\,\Bigr) =  (-1)^{\nupp - i},
\end{equation}
where the sign function is defined as 
\begin{equation*}
    \sign(x) = \left\{ \begin{aligned}
        1&, & x > 0, \\
        0&, & x = 0, \\
        -1&, & x < 0.
    \end{aligned} \right.
\end{equation*}
It means that the sign of $f(t_i)$ alternates with the index $i$.
Thus, by the intermediate value theorem, \(\nupp - 1\) eigenvalues are located in the intervals 
\((t_i,\, t_{i+1})\) for \(1 \le i \le \nupp - 1\), with each interval containing at least one eigenvalue.
The remaining eigenvalue can then be determined by examining the interval \((0,\, t_1)\).

To prove there is one remaining root in the interval $(0, t_1)$, it is sufficient to prove that $\fupp(0)$ and $\fupp(t_1)$ have opposite signs.
By direct computation, we obtain
\begin{equation*}
\begin{aligned}
    \sign(\fupp(0) \fupp(t_1)) &= \sign(\fupp(0)) \sign(\fupp(t_1)) \\
        &= - \sign\Bigl(\,1 - \sum_{i=1}^{\nupp} 
    \frac{\aupp_i}{\mu_{2i-1}} \Bigr)
    \sign\Bigl(\,\prod_{i=1}^{\nupp}\mu_{2i-1}\Bigr) \\
        &= \sign\Bigl(\,\sum_{i=1}^{\nupp} 
    \frac{\aupp_i}{\mu_{2i-1}} - 1 \Bigr).
\end{aligned}
\end{equation*}
It is equivalent to proving that 
\begin{equation}
\label{eq:sufficient_cond_BH}
    1 - \sum_{i=1}^{\nupp} 
    \frac{\aupp_i}{\mu_{2i-1}} > 0.
\end{equation}
For convenience of notation, we denote $\gamma_i$ as $(2i-1)\pi/(n+1)$ for $1 \leq i \leq \nupp$. 
\begin{equation}
\begin{aligned}
    \sum_{i=1}^{\nupp} \frac{\aupp_i}{\mu_{2i-1}}
    &= \frac{64}{n+1} \sum_{i=1}^{\nupp}  \frac{\sin^4(\gamma_i/2)\, \sin^2\gamma_i}{28 - 32\cos\gamma_i + 4\cos^2\gamma_i} \\
    &= \frac{16}{n+1} \sum_{i=1}^{\nupp} \frac{\sin^4(\gamma_i/2)\, \sin^2\gamma_i}{(7 - \cos\gamma_i)(1 - \cos\gamma_i)} \\
    &= \frac{8}{n+1} \sum_{i=1}^{\nupp} \frac{\sin^2(\gamma_i/2)\,\sin^2\gamma_i}{7 - \cos\gamma_i} \\
    &< \frac{4}{3n+3} \sum_{i=1}^{\nupp} \sin^2\frac{\gamma_i}{2} \sin^2\gamma_i \\
    &= \frac{1}{6},
\end{aligned}
\end{equation}
where the last trigonometric identity is proved in Appendix~\ref{adx:sec:tri_identities}. 
Thus, we have established the inequality~\eqref{eq:sufficient_cond_BH}. 
Furthermore, the function values $\fupp(0)$ and $\fupp(t_1)$ have different signs. 
By applying the intermediate value theorem once again, we conclude that there are exactly $\nupp$ eigenvalues on the positive real axis.
Besides, these eigenvalues are distributed over the intervals \((0,\, t_1)\) and \((t_i,\, t_{i+1})\) for \(1 \le i \le \nupp - 1\), with each interval containing exactly one eigenvalue.
Since \(\deg(\fupp) = \nupp\), the proof is finished.
\end{proof}

\begin{corollary}
\label{coro:BH_fufl}
Let \(\bdL\) denote the discrete Laplacian \eqref{eq:discrete_Laplacian_matrix_BH} with matrix dimension \(n \ge 5\), constructed using the Bramble--Hubbard scheme
defined in~\eqref{eq:Bramble--Hubbard scheme}. 
Let \(\xupp\) and \(\xlow\) denote the corresponding upper and lower diagonal blocks in 
\eqref{eq:L^4_S_spectrum_decomposition_BH}.
Then all eigenvalues of both $\xupp$ and $\xlow$ are real and positive.
\end{corollary}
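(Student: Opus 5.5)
This corollary should follow immediately from Theorem~\ref{thm:BH_fufl} by a root-counting argument. I would carry out the steps in the following order.

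First, note that the characteristic polynomial $\fupp$ of $\xupp$ has degree $\nupp$. By Lemma~\ref{lemma:charac_poly_BH}, its zeros are exactly the eigenvalues of $\xupp$, counted with algebraic multiplicity.

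Second, invoke part~1 of Theorem~\ref{thm:BH_fufl}. It gives $\nupp$ pairwise disjoint open intervals, namely $(0,\mu_1)$ and $(\mu_{2i-1},\mu_{2i+1})$ for $1\le i\le \nupp-1$, each containing a real zero of $\fupp$. These intervals are disjoint because $0<\mu_1<\mu_3<\cdots$, which holds since the $\mu_i$ are positive and strictly increasing. So $\fupp$ has at least $\nupp$ distinct real zeros, all lying in $(0,\infty)$.

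Third, compare with the degree. Since $\deg \fupp=\nupp$, these zeros exhaust all roots. Each of them is therefore simple, and $\fupp$ has no nonreal roots. Hence every eigenvalue of $\xupp$ is real and strictly positive.

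The argument for $\xlow$ is identical, using part~2 of Theorem~\ref{thm:BH_fufl}, the intervals $(0,\mu_2)$ and $(\mu_{2i},\mu_{2i+2})$, and $\deg\flow=\nlow$.

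There is essentially no obstacle beyond making the counting step explicit. Theorem~\ref{thm:BH_fufl} alone guarantees at least one root in each interval, and it is the degree bound that upgrades this to "all roots". The small cases $n\in\{5,6,7\}$ are already covered by Lemma~\ref{lemma:charac_poly_BH} and Theorem~\ref{thm:BH_fufl}, which are stated for all $n\ge5$, so no separate treatment is needed.

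As a remark, combining the two blocks with part~3 of Lemma~\ref{lemma:charac_poly_BH} gives Theorem~\ref{thm:BH_real_positive} for $\bdL$ itself.
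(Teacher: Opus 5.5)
Your proposal is correct and takes the same approach as the paper: the corollary is read off directly from Theorem~\ref{thm:BH_fufl}. That theorem's proof ends with exactly your counting step: at least one root in each of the \(n_{\mathrm{U}}\) disjoint positive intervals, combined with \(\deg f_{\mathrm{U}} = n_{\mathrm{U}}\), and likewise for the lower block. Your remark that the cases \(n\in\{5,6,7\}\) are absorbed by Lemma~\ref{lemma:charac_poly_BH} also matches how the paper handles them.
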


As a direct consequence of Corollary~\ref{coro:BH_fufl} and Lemma~\ref{lemma:charac_poly_BH}, 
we immediately obtain Theorem~\ref{thm:BH_real_positive}.

%% file: tex_src/spec_analysis_third_order_boundary.tex
\section{Spectral Analysis of Fourth-Order One-Sided Scheme}
\label{sec:spec_analysis_third_order}

In this section, we present our methodology for the spectral analysis of the discrete Laplacian from the fourth-order one-sided scheme \eqref{eq:Third-order near-boundary-accurate scheme},
\begin{equation}
\label{eq:discrete_Laplacian_matrix_third_order}
L = 
\begin{bmatrix}
20 & -6 & -4 & 1 \\
-16 & 30 & -16 & 1 \\
1 & -16 & 30 & -16 & 1 \\
& \ddots & \ddots & \ddots & \ddots & \ddots & \\
& & 1 & -16 & 30 & -16 & 1 \\
& & & 1 & -16 & 30 & -16 \\
& & & 1 & -4 & -6 & 20
\end{bmatrix},
\end{equation}
where the leading coefficient $1/(12h^2)$ is also omitted.
This simplification does not affect the validity of our results in this section.  

The proof framework follows exactly the same structure as that introduced in Section~\ref{sec:spec_analysis_BH}. 
Our approach consists of two main steps, referred to as \stepa\ and \stepb. 

\subsection{\stepa: Computation for the Characteristic Polynomial}

A direct computation of the characteristic polynomial of the discrete Laplacian $\bdL$ is not straightforward and does not fully exploit its underlying structure.
We therefore follow essentially the same procedure as for the Bramble--Hubbard scheme to compute the characteristic polynomial.
The main difference lies only in the specific numerical values, and we therefore omit the analogous details.


The discrete Laplacian \(\bdL\) generated by scheme \eqref{eq:Third-order near-boundary-accurate scheme} can be regarded as a rank-two perturbation of \(\bdS\),
\begin{equation}
\label{eq:rank_two_third_order}
    \bdL = \bdS + \bde_1 \bdz_1\trans + \bde_n \bdz_n\trans,
\end{equation}
where 
\begin{equation}
\label{eq:L-S_third_order}
    \bdz_1 = \bigl[ -9, 10, -5, 1, 0, \dots, 0\bigr]\trans, \quad
    \bdz_n = \bigl[0, \dots, 0, 1, -5, 10, -9 \bigr]\trans.
\end{equation}
Using the same approach as in Section~\ref{sec:spec_analysis_BH}, 
we apply Lemma~\ref{lemma:centro_block} to $L$, $S$, and $L - S$.
We derive that for \(n \geq 8\),
\begin{align}
\label{eq:L^4_S_spectrum_decomposition_third_order}
    & \bdP \bdL \bdP\trans = 
    \begin{bmatrix}
      \xupp & \bdzero \\
      \bdzero & \xlow
    \end{bmatrix},
    \quad
    \bdP \bdS \bdP\trans = 
    \begin{bmatrix}
      \yupp & \bdzero \\
      \bdzero & \ylow
    \end{bmatrix}, \\
\label{eq:L-S_spectrum_decomposition_third_order}
    & \hspace{4em} 
    \bdP (\bdL - \bdS) \bdP\trans = 
    \begin{bmatrix}
      \zupp & \bdzero \\
      \bdzero & \zlow
    \end{bmatrix}.
\end{align}
The submatrices $\xupp$, $\xlow$, $\yupp$, and $\ylow$ take slightly different forms depending on whether $n$ is even or odd.
The corresponding differences $\zupp$ and $\zlow$ share the same structure and can be expressed as
\begin{equation}
\label{eq:zupp_zlow_third_order}
    \zupp = \vupp \wupp\trans, \quad 
    \zlow = \vlow \wlow\trans,
\end{equation}
where the vectors $\vupp,\, \wupp \in \R^{\nupp}$ and $\vlow,\, \wlow \in \R^{\nlow}$ 
are given by
\begin{equation}
\label{eq:f1fn_third_order}
    \vupp = \bigl[1,\, 0,\, \dots,\, 0\bigr]\trans, 
    \quad 
    \vlow = \bigl[0,\, \dots,\, 0,\, 1\bigr]\trans,
\end{equation}
and
\begin{equation}
\label{eq:w1wn_third_order}
    \wupp = \bigl[-9,\, 10,\, -5,\, 1,\, 0,\, \dots,\, 0\bigr]\trans, 
    \quad
    \wlow = \bigl[0,\, \dots,\, 0,\, 1,\, -5,\, 10,\, -9\bigr]\trans.
\end{equation}
Thus, these two matrices are of rank one.
Substituting~\eqref{eq:zupp_zlow_third_order} into~\eqref{eq:L^4_S_spectrum_decomposition_third_order} yields
\begin{equation}
\label{eq:two_rank_one_third_order}
    \xupp = \yupp + \vupp \,\wupp\trans, 
    \quad 
    \xlow = \ylow + \vlow \,\wlow\trans.
\end{equation}

The rank-two correction \eqref{eq:rank_two_third_order} can thus be decomposed into two rank-one corrections~\eqref{eq:two_rank_one_third_order}.
This decomposition enables the characteristic polynomials of \(\xupp\) and \(\xlow\) to be computed separately 
by applying Sylvester’s determinant identity, 
together with the eigendecompositions of \(\yupp\) and \(\ylow\).

\begin{lemma}
\label{lemma:charac_poly_third_order}
Let \(\bdL\) denote the discrete Laplacian \eqref{eq:discrete_Laplacian_matrix_third_order} with matrix dimension $n \geq 5$, constructed by fourth-order one-sided scheme~\eqref{eq:Third-order near-boundary-accurate scheme}, and let \(\xupp\) and \(\xlow\) denote the corresponding upper and lower diagonal blocks
in~\eqref{eq:L^4_S_spectrum_decomposition_third_order}. 
Then the following results hold:
\begin{enumerate}
    \item The characteristic polynomial of \(\xupp\) can be expressed as 
    \begin{equation}
    \label{eq:fu_general_third_order}
        \fupp(z) 
        = \Bigl(\,1 + \sum_{i=1}^{\nupp} 
        \frac{\aupp_i}{z - \mu_{2i-1}} \Bigr)
        \prod_{i=1}^{\nupp}(z - \mu_{2i-1}),
    \end{equation}
    where \(\mu_{2i-1}\) is defined in~\eqref{eq:orthogonal_eigenmatrix_eigenvalue},
    \(\theta = \pi/(n + 1)\), and the coefficients \(\aupp\) take the following forms: for \(1 \le i \le \nupp\),
    \begin{equation}
    \label{eq:third_order_du}
        \aupp_i = 
            \frac{64}{n+1}
            \sin^4\!\frac{(2i-1)\pi}{2(n+1)}\,
            \sin^2\!\frac{(2i-1)\pi}{n+1}\,
            \Bigl(1 - 2\cos\!\frac{(2i-1)\pi}{n+1}\Bigr).
    \end{equation}
    
    \item The characteristic polynomial of \(\xlow\) can be expressed as 
    \begin{equation}
    \label{eq:fl_general_third_order}
        \flow(z) 
        = \Bigl(\,1 + \sum_{j=1}^{\nlow} 
        \frac{\alow_j}{z - \mu_{2j}} \Bigr)
        \prod_{j=1}^{\nlow}(z - \mu_{2j}),
    \end{equation}
    where \(\mu_{2j}\) is defined in~\eqref{eq:orthogonal_eigenmatrix_eigenvalue},
    \(\theta = \pi/(n + 1)\), and the coefficients \(\alow\) take the following forms: for \(1 \le j \le \nlow\),
    \begin{equation}
    \label{eq:third_order_dl}
            \alow_j = 
            \frac{64}{n+1}
            \sin^4\!\frac{j\pi}{n+1}
            \sin^2\!\frac{2j\pi}{n+1}\,
            \bigl(1 - 2\cos\!\frac{2j\pi}{n+1}\bigr).
    \end{equation}

    \item The characteristic polynomial of \(\bdL\) can be written as 
    \begin{equation*}
        f_{\Delta}(z) = \fupp(z)\,\flow(z).
    \end{equation*}
\end{enumerate}

\end{lemma}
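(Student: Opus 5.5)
I would follow the proof of Lemma~\ref{lemma:charac_poly_BH} almost line by line; only the perturbation vectors $\wupp,\wlow$ change. For $n\in\{5,6,7\}$ I would check the three identities by exact symbolic computation of $\det(zI-\bdL)$, and of the block characteristic polynomials obtained from Lemma~\ref{lemma:centro_block}. These are finitely many polynomial identities in $z$.

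For $n\ge 8$, the four-entry corner perturbations $\bde_1\bdz_1\trans$ and $\bde_n\bdz_n\trans$ do not overlap under the folding $\bdP$. Hence \eqref{eq:zupp_zlow_third_order} holds, and with it \eqref{eq:two_rank_one_third_order}. The eigendecompositions $\yupp=\bdU_1\bdlda_1\bdU_1\trans$ and $\ylow=\bdU_2\bdlda_2\bdU_2\trans$ depend only on $\bdS$, so they are reused unchanged from \eqref{eq:yu_yl_eigendecomposition_BH}. Repeating the chain \eqref{eq:xu_determinant_compute_BH} with Lemma~\ref{lemma:sylvester} gives
\[
\fupp(z)=\det(zI-\bdlda_1)\Bigl(1-\wupp\trans\bdU_1(zI-\bdlda_1)^{-1}\bdU_1\trans\vupp\Bigr).
\]
It therefore remains to identify $-(\bdU_1\trans\wupp)_i(\bdU_1\trans\vupp)_i$ with $\aupp_i$.

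From the Appendix description of $\bdP\bdQ\bdR$, the first row of $\bdU_1$ is $\sqrt{2}\,q_{1,2i-1}$. More generally, the entries of $\bdU_1$ in rows $k\le 4$ are $\sqrt{2}\,q_{k,2i-1}=\tfrac{2}{\sqrt{n+1}}\sin(k\gamma_i)$, where $\gamma_i=(2i-1)\pi/(n+1)$. This uses $n\ge 8$, so that the first four rows lie in the upper half. Consequently
\[
-(\bdU_1\trans\wupp)_i(\bdU_1\trans\vupp)_i=\frac{4}{n+1}\sin\gamma_i\bigl(9\sin\gamma_i-10\sin 2\gamma_i+5\sin 3\gamma_i-\sin 4\gamma_i\bigr).
\]

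The one real computation is the trigonometric identity
\[
9\sin\gamma-10\sin2\gamma+5\sin3\gamma-\sin4\gamma=16\sin^4(\gamma/2)\sin\gamma\,(1-2\cos\gamma).
\]
To prove it, I would divide by $\sin\gamma$ and write everything in $c=\cos\gamma$, using $\sin2\gamma/\sin\gamma=2c$, $\sin3\gamma/\sin\gamma=4c^2-1$, and $\sin4\gamma/\sin\gamma=8c^3-4c$. The left side becomes $9-20c+5(4c^2-1)-(8c^3-4c)=4-16c+20c^2-8c^3=4(1-c)^2(1-2c)$. Since $16\sin^4(\gamma/2)=4(1-c)^2$, the two sides agree. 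As a sanity check, setting the $(1-2c)$ factor to $1$ reproduces the Bramble--Hubbard polynomial $5-8c+4c^2-1=4(1-c)^2$. Multiplying by the prefactor gives exactly \eqref{eq:third_order_du}.

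Part~2 is identical: use the last rows of $\bdU_2$, where the reversal is absorbed by the centrosymmetry, and set $\gamma=2j\pi/(n+1)$ to obtain \eqref{eq:third_order_dl}. Part~3 follows from the block-diagonal similarity \eqref{eq:L^4_S_spectrum_decomposition_third_order}.

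The main obstacle I expect is bookkeeping rather than analysis. I would need to confirm the signs and row indices of $\bdU_1$ and $\bdU_2$ coming from $\bdP\bdQ\bdR$, and in particular the sign produced when $\vlow$ sits at the last position and $\bdF$ reverses the order. I would settle this by checking the $n=8$ and $n=9$ cases against direct computation. Unlike in Lemma~\ref{lemma:charac_poly_BH}, positivity of the coefficients is not claimed here, and indeed $1-2\cos\gamma$ changes sign, so no sign argument is required.
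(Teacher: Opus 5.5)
Your proposal is correct and follows essentially the same route as the paper. The paper also verifies $n\in\{5,6,7\}$ by exact computation, and for $n\ge 8$ it reuses the eigendecomposition of the blocks of $S$, applies the Sylvester-identity reduction, and uses the identity $9\sin\gamma-10\sin2\gamma+5\sin3\gamma-\sin4\gamma=16\sin^4(\gamma/2)\sin\gamma\,(1-2\cos\gamma)$, which you verify more explicitly than the paper does; your sign worry in part~2 is harmless, because $U_2^{\top}v_{\mathrm L}$ and $U_2^{\top}w_{\mathrm L}$ both carry the factor $-2/\sqrt{n+1}$, which cancels in their product.
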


\begin{proof}
For \(n \in \{5,6,7\}\), we can verify the  characteristic-polynomial identities directly by exact computation.
We therefore assume \(n \ge 8\) for the remainder of the proof.

We only provide the proof of part~1, as the proof of part~2 is analogous, and part~3 follows directly from~\eqref{eq:L^4_S_spectrum_decomposition_third_order}.
Using the same approach in the proof of Lemma~\ref{lemma:charac_poly_BH}, we can derive the eigendecomposition of $\yupp$ and $\ylow$,
\begin{equation}
\label{eq:yu_yl_eigendecomposition_third_order}
    \yupp = \bdU_1 \bdlda_1 \bdU_1\trans, \quad
    \ylow = \bdU_2 \bdlda_2 \bdU_2\trans,
\end{equation}
where matrices $U_1$, $U_2$, $\Lambda_1$ and $\Lambda_2$ are defined in \eqref{eq:PQR_participate_BH} and \eqref{eq:lda1_lda2_BH}.

Using the eigendecomposition of \(\yupp\), the characteristic polynomial \(\fupp\) of \(\xupp\) can be derived similarly as \eqref{eq:xu_determinant_compute_BH}:
\begin{equation}
\label{eq:xu_determinant_compute_third_order}
    \fupp(z) = \det(zI - \Lambda_1) 
                \cdot (1 - \wupp \trans U_1(zI-\Lambda_1)^{-1}U_1\trans \vupp).
\end{equation}
The proof is finished by
\begin{equation}
    \begin{aligned}
        & \hspace{1.5em} \wupp\trans U_1(zI-\Lambda_1)^{-1}U_1\trans \vupp \\
        &=-\frac{4}{n+1} \sum_{i=1}^{\nupp} \frac{\bigl(9\sin\gamma_i - 10\sin(2\gamma_i) + 5\sin(3\gamma_i) - \sin(4\gamma_i)\bigr)\sin\gamma_i}{z-\mu_{2i-1}} \\
        &=-\frac{64}{n+1} \sum_{i=1}^{\nupp} \frac{\sin^4(\gamma_i/2) \sin^2\gamma_i (1-2\cos\gamma_i)}{z-\mu_{2i-1}},
    \end{aligned}
\end{equation}
where $\gamma_i$ is denoted as $(2i-1)\pi/(n+1)$ for $1 \leq i \leq \nupp$.
\end{proof}

Different from the Bramble--Hubbard scheme, the relation between the zeros of rational functions
\begin{equation*}
    \gupp(z) = 1 + \sum_{i=1}^{\nupp} 
        \frac{\aupp_i}{z - \mu_{2i-1}},
    \quad 
    \glow(z) = 1 + \sum_{j=1}^{\nlow} 
        \frac{\alow_j}{z - \mu_{2j}},
\end{equation*}
and the eigenvalues of blocks $\xupp$ and $\xlow$ is more complicated.
This is because the coefficients $\aupp_i$ and $\alow_j$ are no longer always positive and may even vanish.
The graphs of the rational functions for $n=9$ are shown in Figure~\ref{fig:OS_rational}.
The zeros of $\gupp(z)$ and $\glow(z)$ are distributed over the intervals $(\mu_{2i-1}, \mu_{2i+1})$ for $1 \leq i \leq \nupp - 1$ and $(\mu_{2i}, \mu_{2i+2})$ for $1 \leq i \leq \nlow - 1$, respectively.
This property  will be established rigorously in the next subsection.

\begin{figure}[!tb]
\centering
\begin{minipage}{0.48\linewidth}
    \centering
    \includegraphics[width=\linewidth]{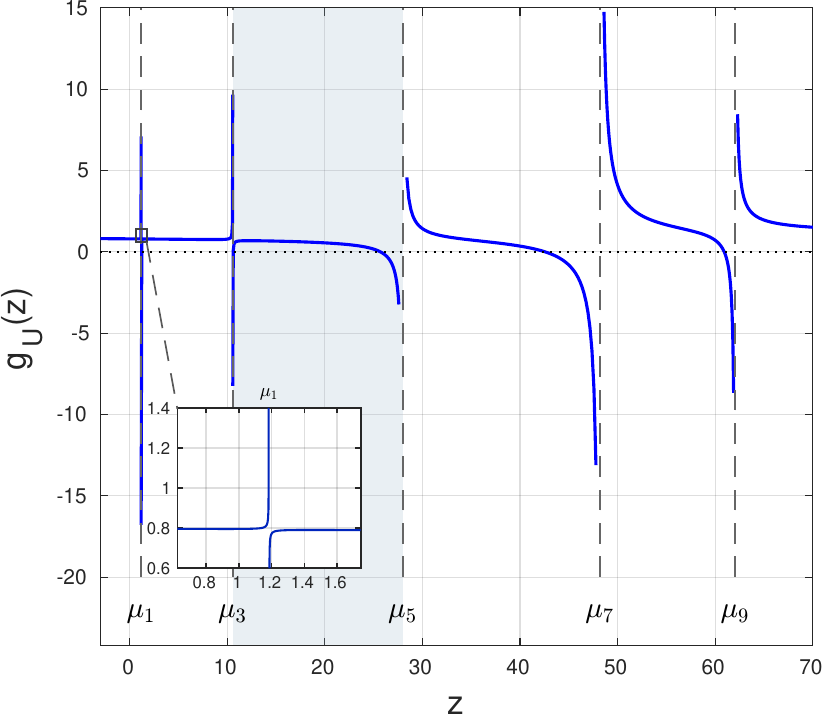}
    \subcaption{Upper rational function, $n = 9$.}
\end{minipage}
\hfill
\begin{minipage}{0.48\linewidth}
    \centering
    \includegraphics[width=\linewidth]{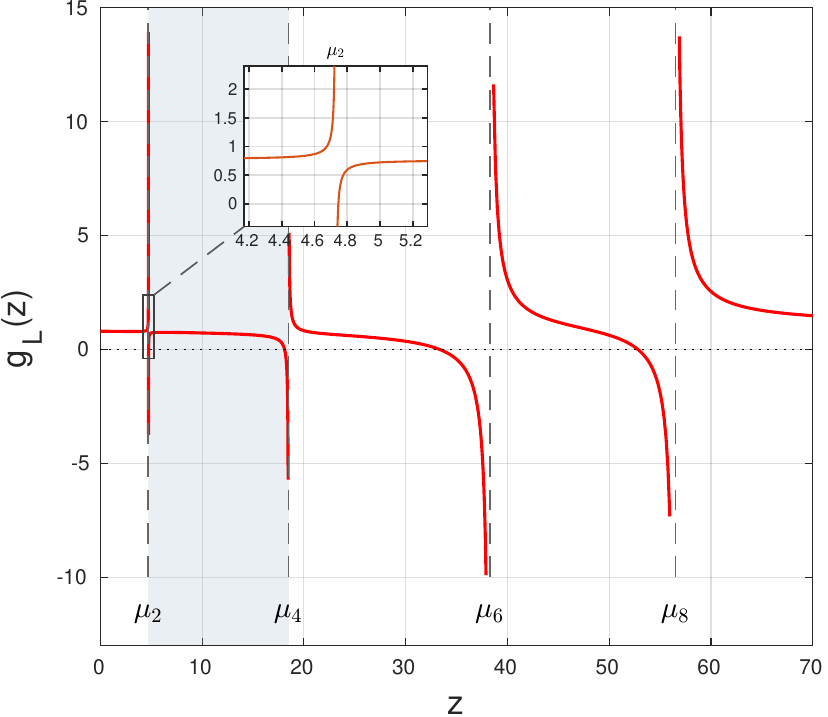}
    \subcaption{Lower rational function, $n = 9$.}
\end{minipage}
\caption{Graphs of the rational functions (a) $\gupp(z)$ and (b) $\glow(z)$ for $n = 9$.
The shaded region contains two zeros, whereas the other regions contain exactly one zero each. 
}
\label{fig:OS_rational}
\end{figure}

\subsection{\stepb: Spectral Analysis Based on the Characteristic Polynomial}
\label{subsec:stepb_third_order}
In this subsection, we rigorously analyze the distribution of the eigenvalues along the positive real axis by applying the intermediate value theorem.
Although this distribution is more complicated than that of the Bramble–Hubbard scheme, it does not introduce additional technical difficulties, but merely makes the analysis more involved.

\begin{theorem}
\label{thm:third_order_fufl}
Let \(\bdL\) denote the discrete Laplacian \eqref{eq:discrete_Laplacian_matrix_third_order} with matrix dimension \(n \ge 5\), constructed using the fourth-order one-sided scheme
defined in~\eqref{eq:Third-order near-boundary-accurate scheme}.\footnote{We note that the factor $1/(12h^2)$ has been omitted from \(\bdL\).}
Let \(\xupp\) and \(\xlow\) denote the corresponding upper and lower diagonal blocks in 
\eqref{eq:L^4_S_spectrum_decomposition_third_order}. 
Then all eigenvalues of both \(\xupp\) and \(\xlow\) are real and positive.
\end{theorem}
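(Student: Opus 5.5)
The plan is to run the interlacing argument of Theorem~\ref{thm:BH_fufl}, modified to handle the sign change of the coefficients. I treat \(\xupp\); the block \(\xlow\) is identical with \(\gamma_i\) replaced by \(2j\pi/(n+1)\). For the finitely many small \(n\) not covered by the generic argument below (those with \(n<8\), and those where some sign class of coefficients is empty), I would verify the claim by exact computation, as in Lemma~\ref{lemma:charac_poly_third_order}.

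Write \(t_i=\mu_{2i-1}\), which are strictly increasing and positive. By Lemma~\ref{lemma:charac_poly_third_order}, \(\aupp_i\) equals a strictly positive factor times \(1-2\cos\gamma_i\). Hence \(\aupp_i<0\) for \(\gamma_i<\pi/3\), \(\aupp_i=0\) for \(\gamma_i=\pi/3\), and \(\aupp_i>0\) for \(\gamma_i>\pi/3\). Since \(\mu\) is increasing in \(\gamma\) on \((0,\pi)\), these three cases correspond exactly to \(t_i<13\), \(t_i=13\) and \(t_i>13\). Here \(13=1-16+28\) is the value of \(4c^2-32c+28\) at \(c=1/2\).

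\textbf{Generic case: no \(\aupp_i\) vanishes.} Let \(k\) be the last index with \(\aupp_k<0\), so \(t_k<13<t_{k+1}\). As in the proof of Theorem~\ref{thm:BH_fufl}, we have \(\fupp(t_i)=\aupp_i\prod_{j\ne i}(t_i-t_j)\).
\begin{itemize}
\item The signs of \(\fupp(t_i)\) alternate within \(1\le i\le k\) and within \(k+1\le i\le\nupp\). By the intermediate value theorem this gives at least one root in each of the \(\nupp-2\) intervals \((t_i,t_{i+1})\) with \(i\ne k\).
\item On \((t_k,t_{k+1})\) we have \(\gupp(z)\to-\infty\) at both ends, because \(\aupp_k<0\) and \(\aupp_{k+1}>0\).
\item At \(z^*=13\) every term \(\aupp_i/(13-t_i)\) is strictly positive, since numerator and denominator have the same sign. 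Thus \(\gupp(13)>1>0\).
\end{itemize}
Therefore \(\gupp\), and hence \(\fupp\), has at least two zeros in \((t_k,13)\cup(13,t_{k+1})\). Together with the previous count this gives \(\nupp=\deg\fupp\) real roots, all lying in \([t_1,t_{\nupp}]\subset(0,\infty)\). So every eigenvalue is real and positive.

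\textbf{Degenerate case: \(\aupp_{k+1}=0\).} This occurs when \(t_{k+1}=13\). Then \(\fupp(13)=0\), so \(13\) is itself a positive real eigenvalue, and the pole at \(13\) drops out of \(\gupp\). The sign alternation still holds within the negative indices and within the positive indices \(i\ge k+2\). The function \(\gupp\) now extends continuously to \(13\) with \(\gupp(13)>1\), by the same termwise sign argument, and tends to \(-\infty\) at \(t_k^+\) and at \(t_{k+2}^-\). This yields one root in \((t_k,13)\) and one in \((13,t_{k+2})\), and the count again reaches \(\nupp\).

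\textbf{Main obstacle.} The key step is locating the two ``extra'' roots in the transition interval, where the endpoint signs agree. I expect the observation that evaluating at \(z=13\) makes every summand nonnegative to resolve it cleanly. The remaining care is in the bookkeeping: verifying that the sign-class structure exists, i.e.\ that both \(\aupp_i<0\) and \(\aupp_i>0\) occur. For the lower block with \(n\le5\) there is no negative coefficient, and the small-\(n\) cases are delegated to exact computation.
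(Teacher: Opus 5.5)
There is a genuine gap, and it is exactly at the step you flag as the crux. The claim that every summand \(\aupp_i/(13-t_i)\) is strictly positive is a sign error. You showed that \(\aupp_i<0\iff t_i<13\), i.e.\ \(\aupp_i<0\iff 13-t_i>0\). So numerator and denominator always have \emph{opposite} signs, every summand is strictly negative, and \(\gupp(13)<1\).

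The same holds at every point of the transition interval. For \(z\in(t_k,t_{k+1})\) one has \(\aupp_i<0<z-t_i\) when \(i\le k\), and \(\aupp_i>0>z-t_i\) when \(i\ge k+1\). Hence \(\gupp<1\) throughout \((t_k,t_{k+1})\) while tending to \(-\infty\) at both ends. Whether \(\gupp\) ever becomes positive there is a quantitative question that sign bookkeeping cannot settle. Your degenerate case rests on the same false claim \(\gupp(13)>1\). As written, the two roots in the transition interval are not established, and with them neither is the full count of \(\nupp\) real roots.

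The missing ingredient is a bound showing that the total negative contribution is less than \(1\) at some point of the interval. The paper does this at the midpoint \(t_{\mathrm{mid}}\). It writes \(\gupp(t_{\mathrm{mid}})=1-\Phi_{\mathrm U}\), with \(\Phi_{\mathrm U}\) a sum of nonnegative terms. It then proves \(\Phi_{\mathrm U}<1\) from the local estimate \eqref{eq:xu_local_estimate_third_order} together with the identity \(\sum_i\sin^4(\gamma_i/2)\sin^2\gamma_i=5(n+1)/64\) of Lemma~\ref{lemma:tri_identities_OS}.

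Your point \(z=13\) can be repaired in the same spirit, and more cleanly. Since \(13-t_i=(1-2\cos\gamma_i)(2\cos\gamma_i-15)\), the sign-changing factor of \(\aupp_i\) cancels, and
\[
\gupp(13)=1-\frac{64}{n+1}\sum_{i}\frac{\sin^4(\gamma_i/2)\,\sin^2\gamma_i}{15-2\cos\gamma_i}
>1-\frac{64}{13(n+1)}\cdot\frac{5(n+1)}{64}=\frac{8}{13}>0 .
\]
In the degenerate case the summand with \(\aupp_{k+1}=0\) is simply absent, which only makes the subtracted sum smaller. The same computation with \(\delta_j=2j\pi/(n+1)\) handles \(\glow\). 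With this estimate in place of the sign claim, the rest of your counting goes through, and it even avoids the paper's separate local estimate. Without some such quantitative bound, however, the proof is incomplete.
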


\begin{proof}
For \(n \in \{5,6,7\}\), exact Sturm sequence computations applied to the characteristic polynomials \(\fupp\) and \(\flow\) show that they have \(\nupp\) and \(\nlow\) distinct zeros in \((0,\infty)\), respectively.
We therefore assume \(n \ge 8\) for the remainder of the proof.

\paragraph{Proof for the case of the matrix \(\xupp\).}
We select the grid points as
\begin{equation}
\label{eq:xu_grid_point_third_order}
    t_i = \mu_{2i-1}, \qquad 1 \le i \le \nupp,
\end{equation}
which satisfy \(0<t_1<\cdots<t_{\nupp}\). 
The value of the function $\fupp$ at each grid point $t_i$ is given by
\begin{equation}
\label{eq:xu_function_value_third_order}
    \fupp(t_i) = \aupp_i \prod_{j \ne i} (t_i - t_j),
\end{equation}
whose signs alternate between positive and negative, except at a few special points. 

To examine this behavior, we decompose the expression into two parts: (i) the product term $\prod_{j \ne i} (t_i - t_j)$, and (ii) the coefficient $\aupp_i$.
For the first part, the sign of the product term alternates with respect to the index \(i\).  
For the second part, the non-positiveness of the coefficient $\aupp_i$ is equivalent to the condition
\begin{equation*}
    1 - 2\cos\!\frac{(2i-1)\pi}{n+1} \le 0
    \,\,\,\iff\,\,\, i \le  \Bigl\lfloor \frac{n + 4}{6} \Bigr\rfloor =: \mupp.
\end{equation*}
Moreover, this factor vanishes if and only if \(i = (n + 4)/6\) is an integer.  
Therefore, the specific sign pattern depends on whether 
\( n \not\equiv 2 \pmod{6} \) or \( n \equiv 2 \pmod{6} \).  
We summarize these results in Tables~\ref{tab:fu1} and~\ref{tab:fu2}.

\begin{table}[!tb]
\centering
\caption{
Sign pattern of $\fupp(t_i)$ in the case where \(n \not\equiv 2 \pmod{6}\).
The columns distinguish even and odd values of~\(\nupp\), while the rows distinguish the range and parity of~\(i\).
Here, “$+$” indicates a positive value, “$-$” a negative value, and “$0$” denotes zero.
}
\label{tab:fu1}
\begin{tabular}{cccc}
\toprule
\multicolumn{2}{c}{Index $i$} 
  & \multicolumn{2}{c}{Sign of $\fupp(t_i)$} \\
\cmidrule(lr){1-2}\cmidrule(lr){3-4}
Range of $i$ & Parity of $i$ 
  & $\nupp$ even & $\nupp$ odd \\
\midrule
\multirow{2}{*}{$i \le \mupp$}
  & odd  & $+$ & $-$ \\
  & even & $-$ & $+$ \\
\midrule
\multirow{2}{*}{$i > \mupp$}
  & odd  & $-$ & $+$ \\
  & even & $+$ & $-$ \\
\bottomrule
\end{tabular}
\end{table}

\begin{table}[!tb]
\centering
\caption{
Sign pattern of $\fupp(t_i)$ in the case where \(n \equiv 2 \pmod{6}\).
The columns correspond to even and odd values of~\(\nupp\), while the rows describe the range and parity of~\(i\).
Here, “$+$”, “$-$”, and “$0$” denote positive, negative, and zero values, respectively.
}
\label{tab:fu2}
\begin{tabular}{cccc}
\toprule
\multicolumn{2}{c}{Index $i$} 
  & \multicolumn{2}{c}{Sign of $\fupp(t_i)$} \\
\cmidrule(lr){1-2}\cmidrule(lr){3-4}
Range of $i$ & Parity of $i$ 
  & $\nupp$ even & $\nupp$ odd \\
\midrule
\multirow{2}{*}{$i < \mupp$}
  & odd  & $+$ & $-$ \\
  & even & $-$ & $+$ \\
\midrule
$i = \mupp$ & -- & $0$ & $0$ \\
\midrule
\multirow{2}{*}{$i > \mupp$}
  & odd  & $-$ & $+$ \\
  & even & $+$ & $-$ \\
\bottomrule
\end{tabular}
\end{table}

In the case where \( n \not\equiv 2 \pmod{6} \), by the intermediate value theorem, 
there exist \(\nupp - 2\) positive roots distributed across the intervals 
\(\left(t_i,\, t_{i+1}\right)\) for 
\(i \in \{1, 2, \ldots, \nupp - 1\} \setminus \{\mupp\}\), 
with each interval containing at least one positive root.

In contrast, when \( n \equiv 2 \pmod{6} \), the situation is slightly different: 
there are still \(\nupp - 2\) positive roots, but they are distributed across 
\(\nupp - 3\) intervals \(\left(t_i,\, t_{i+1}\right)\) for 
\(i \in \{1, 2, \ldots, \nupp - 1\} \setminus \{\mupp - 1,\, \mupp\}\), 
with each of these intervals containing at least one positive root, 
and one additional root located exactly at \(t_{\mupp}\).

In either case, the preceding sign analysis accounts for at least \(\nupp-2\) positive zeros, leaving two zeros to be located.
Figures~\ref{example:fu n=9} and~\ref{example:fu n=8} illustrate these two cases.

\begin{figure}[!t]
\begin{center}
\begin{tikzpicture}[x=1.5cm,y=1cm]
\draw[->] (-3,0) -- (3,0) node[right] {$x$};
\draw[->] (-2.8,-1.8) -- (-2.8,2) node[above] {$y$}; 
\filldraw[black] (-2.8,0) circle (1.5pt)
node[below left] {\footnotesize$O$};
\draw[thick] 
    (-2.4,-1.3)
    to[out=80, in=180] (-1.6,1)
    to[out=0, in=180] (-0.7,-0.5)
    to[out=0, in=180] (0.4,1)
    to[out=0, in=160] (1.4,-1)
    to[out=340, in=250] (2.4,1);

\draw[dashed] (-2.37,0) -- (-2.37,-0.92);
\draw[dashed] (-1.3,0) -- (-1.3, 0.75);
\draw[dashed] (-0.6,0) -- (-0.6,-0.48);
\draw[dashed] (0.05,0) -- (0.05,0.75);
\draw[dashed] (1.2,0) -- (1.2,-0.8);
\draw[dashed] (2.35,0) -- (2.35,0.8);

\filldraw[red] (-2.35,-0.92) circle (1.5pt) node[xshift=5pt,yshift=-15pt] {\footnotesize$\color{black}{(t_1,\fupp(t_1))}$};
\filldraw[red] (-1.3, 0.75) circle (1.5pt) node[above=10pt] {\footnotesize$\color{black}{(t_2,\fupp(t_2))}$};
\node[
  star,
  star points=5,
  star point ratio=2.25,
  fill=blue,
  draw=blue,
  inner sep=1.5pt,
  label={[below=15pt] {\footnotesize$\color{black}{\bigl(\tfrac{t_2+t_3}{2},\fupp(\tfrac{t_2+t_3}{2})\bigr)}$}}
] at (-0.6,-0.48) {};
\filldraw[red] (0.05,0.75) circle (1.5pt) node[xshift=5pt,yshift=18pt] {\footnotesize$\color{black}{(t_3,\fupp(t_3))}$};
\filldraw[red] (1.2,-0.8) circle (1.5pt) node[below=5pt] {\footnotesize$\color{black}{(t_4,\fupp(t_4))}$};
\filldraw[red] (2.35,0.8) circle (1.5pt) node[above=8pt] {\footnotesize$\color{black}{(t_5,\fupp(t_5))}$};

\draw[thick, fill=white] (-2.25,0) circle (2.8pt); 
\draw[thick] (-1.09+0.1,+0.1) -- (-1.09-0.1,-0.1); 
\draw[thick] (-1.09-0.1,0.1) -- (-1.09+0.1,-0.1); 
\draw[thick] (-0.24+0.1,0.1) -- (-0.24-0.1,-0.1); 
\draw[thick] (-0.24-0.1,0.1) -- (-0.24+0.1,-0.1); 
\draw[thick, fill=white] (0.93,0) circle (2.8pt);  
\draw[thick, fill=white] (2.17,0) circle (2.8pt);  

\end{tikzpicture}
\caption{Schematic sign diagram for \(n=9\), \(\nupp=5\), and \(\mupp=2\).
The figure illustrates the function \(\fupp\).
The circles `\(\circ\)' indicate zeros whose existence follows directly from the intermediate value theorem, whereas the crosses `\(\times\)' indicate potential zeros that cannot be identified directly by this argument.}
\label{example:fu n=9}
\end{center}
\end{figure}

\begin{figure}[!t]
\begin{center}
\begin{tikzpicture}[x=1.45cm, y=1cm]
\draw[->] (-3,0) -- (3,0) node[right] {$x$};
\draw[->] (-2.8,-1.8) -- (-2.8,2) node[above] {$y$}; 
\filldraw[black] (-2.8,0) circle (1.5pt)
node[below left] {\footnotesize$O$};

\draw[thick] 
    (-2.5,1.1) 
    to[out=290, in=180] (-1.6,-1) 
    to[out=0, in=240] (-0.95,0) 
    to[out=60, in=180] (-0.2,0.8) 
    to[out=0, in=160] (1,-1) 
    to[out=340, in=260] (2.2,0.75)
    to[out=80, in=260] (2.3,1.4);

\draw[dashed] (-2.46,0) -- (-2.46,0.7);
\draw[dashed] (-0.95,0) -- (-0.95,-1);
\draw[dashed] (-0.1, 0) -- (-0.1, 0.77);
\draw[dashed] (0.7,-0.7) -- (0.7,0);
\draw[dashed] (2.2,0) -- (2.2,0.75);


\draw[thick] (-2.29+0.1,0.1) -- (-2.29-0.1,-0.1); 
\draw[thick] (-2.29-0.1,0.1) -- (-2.29+0.1,-0.1); 

\draw[thick, fill=white] (-0.95,0) circle (2.8pt); 

\draw[thick] (0.42+0.1,0.1) -- (0.42-0.1,-0.1); 
\draw[thick] (0.42-0.1,0.1) -- (0.42+0.1,-0.1); 

\draw[thick, fill=white] (2.03,0) circle (2.8pt);  

\filldraw[red] (-2.46,0.9) circle (1.5pt) node[xshift=10pt, yshift=15pt]{\footnotesize$\color{black}{(t_1,\fupp(t_1))}$};
\filldraw[red] (-0.95,0) circle (1.5pt) node[below=28pt] {\footnotesize$\color{black}{(t_2,\fupp(t_2))}$};
\node[
  star,
  star points=5,
  star point ratio=2.25,
  fill=blue,
  draw=blue,
  inner sep=1.5pt,
  label=above:{\footnotesize$\color{black}{\bigl(\tfrac{t_2 +t_3}{2},\fupp(\tfrac{t_2 +t_3}{2})\bigr)}$}
] at (-0.1, 0.77) {};
\filldraw[red] (0.7,-0.7) circle (1.5pt) node[below=8pt] {\footnotesize$\color{black}{(t_3,\fupp(t_3))}$};
\filldraw[red] (2.2,0.75) circle (1.5pt) node[right] {\footnotesize$\color{black}{(t_4,\fupp(t_4))}$};
\end{tikzpicture}
\caption{Schematic sign diagram for \(n=8\), \(\nupp=4\), and \(\mupp=2\).
The figure illustrates the function \(\fupp\).
The circles `\(\circ\)' indicate zeros whose existence follows directly from the intermediate value theorem, whereas the crosses `\(\times\)' indicate potential zeros that cannot be identified directly by this argument.}
\label{example:fu n=8}
\end{center}
\end{figure}

To identify the remaining two zeros, we examine the transition interval \((t_{\mupp},t_{\mupp+1})\).
Define the midpoint and the corresponding sum by
\begin{equation}
\label{eq:xu_midpoint_quantities_third_order}
    t_{\mathrm{mid}}
    =\frac{t_{\mupp}+t_{\mupp+1}}{2},
    \qquad
    \Phi_{\mathrm U}
    =\sum_{i=1}^{\nupp}
    \frac{2\aupp_i}{2t_i-t_{\mupp}-t_{\mupp+1}}.
\end{equation}
With the notation in~\eqref{eq:xu_midpoint_quantities_third_order}, Equation~\eqref{eq:fu_general_third_order} gives
\begin{equation}
\label{eq:xu_midpoint_factorization_third_order}
    \fupp(t_{\mathrm{mid}})
    =(1-\Phi_{\mathrm U})
    \prod_{i=1}^{\nupp}(t_{\mathrm{mid}}-t_i).
\end{equation}
To locate the remaining roots, we compare the midpoint value with the endpoint values of the transition interval.
By~\eqref{eq:xu_midpoint_factorization_third_order}, it suffices to prove \(\Phi_{\mathrm U}<1\), after which the midpoint sign follows directly.
For \(1\leq i\leq\nupp\), set \(\gamma_i=(2i-1)\pi/(n+1)=(2i-1)\theta\).
We first establish the local estimate
\begin{equation}
\label{eq:xu_local_estimate_third_order}
    \frac{1-2\cos\gamma_i}
    {2t_i-t_{\mupp}-t_{\mupp+1}}<\frac{1}{10},
    \qquad 1\leq i\leq\nupp.
\end{equation}
The proof of~\eqref{eq:xu_local_estimate_third_order} involves intricate but straightforward computations and is provided in Appendix~\ref{adx:sec:Details in the proof of Lemma}.
Combining~\eqref{eq:xu_local_estimate_third_order} with Lemma~\ref{lemma:tri_identities_OS} yields
\begin{equation}
\label{eq:xu_phi_bound_third_order}
\begin{aligned}
    \Phi_{\mathrm U}
    &<\frac{64}{5(n+1)}
      \sum_{i=1}^{\nupp}
      \sin^4\frac{\gamma_i}{2}\sin^2\gamma_i
    =1.
\end{aligned}
\end{equation}
By~\eqref{eq:xu_midpoint_factorization_third_order} and~\eqref{eq:xu_phi_bound_third_order}, the factor \(1-\Phi_{\mathrm U}\) is positive.
Since precisely \(\nupp-\mupp\) factors in \(\prod_{i=1}^{\nupp}(t_{\mathrm{mid}}-t_i)\) are negative,
\begin{equation}
\label{eq:xu_midpoint_sign_third_order}
    \sign\fupp(t_{\mathrm{mid}})=(-1)^{\nupp-\mupp}.
\end{equation}

If \(n\not\equiv2\pmod{6}\), Table~\ref{tab:fu1} and~\eqref{eq:xu_midpoint_sign_third_order} show that \(\fupp(t_{\mathrm{mid}})\) has the sign opposite to those of both \(\fupp(t_{\mupp})\) and \(\fupp(t_{\mupp+1})\).
Hence the intervals \((t_{\mupp},t_{\mathrm{mid}})\) and \((t_{\mathrm{mid}},t_{\mupp+1})\) each contain a root.
Together with the \(\nupp-2\) roots identified from the sign pattern in Table~\ref{tab:fu1}, these give at least \(\nupp\) positive roots.

It remains to consider \(n\equiv2\pmod{6}\).
Since \(n\geq8\), we have \(\mupp=(n+4)/6\geq2\), while \(\nupp-\mupp=(n-2)/3\) is even.
Table~\ref{tab:fu2} and~\eqref{eq:xu_midpoint_sign_third_order} give
\begin{equation}
\label{eq:xu_exceptional_signs_third_order}
    \fupp(t_{\mupp-1})>0,\qquad
    \fupp(t_{\mupp})=0,\qquad
    \fupp(t_{\mathrm{mid}})>0,\qquad
    \fupp(t_{\mupp+1})<0.
\end{equation}
The signs of \(\fupp(t_{\mathrm{mid}})\) and \(\fupp(t_{\mupp+1})\) in~\eqref{eq:xu_exceptional_signs_third_order} give a root in \((t_{\mathrm{mid}},t_{\mupp+1})\).
If \(\fupp\) is negative at some point of
\[
    (t_{\mupp-1},t_{\mupp})\cup(t_{\mupp},t_{\mathrm{mid}}),
\]
applying the intermediate value theorem between that point and \(t_{\mupp-1}\) in the first interval, or \(t_{\mathrm{mid}}\) in the second, supplies one further root distinct from \(t_{\mupp}\).
Otherwise, \(\fupp\geq0\) in a neighborhood of \(t_{\mupp}\).
Since \(\fupp(t_{\mupp})=0\), the point \(t_{\mupp}\) is a local minimum and \(\fupp'(t_{\mupp})=0\), so \(t_{\mupp}\) has algebraic multiplicity at least two.
Thus, in both the exceptional and nonexceptional cases, \(\fupp\) has at least \(\nupp\) positive zeros counted with algebraic multiplicity.
Since \(\deg(\fupp)=\nupp\), this proves the assertion for \(\xupp\).

\paragraph{Proof for the case of the matrix \(\xlow\).}
We now treat the lower block \(\xlow\), for which the sign transition is slightly different.
Define the grid points and the transition index by
\begin{equation}
\label{eq:xl_grid_data_third_order}
    s_j=\mu_{2j}\quad (1\leq j\leq\nlow),
    \qquad
    \mlow=\left\lfloor\frac{n+1}{6}\right\rfloor.
\end{equation}
For \(n\geq8\), the quantities in~\eqref{eq:xl_grid_data_third_order} satisfy \(1\leq\mlow<\nlow\), and \(0<s_1<\cdots<s_{\nlow}\).
The squared sine factors in~\eqref{eq:third_order_dl} are strictly positive, and hence
\begin{equation}
\label{eq:xl_coefficient_sign_third_order}
    \alow_j\leq0\iff j\leq\mlow,
    \qquad
    \alow_j=0
    \iff n\equiv5\!\!\!\pmod{6}\ \text{and}\ j=\mlow.
\end{equation}
At each grid point,
\begin{equation}
\label{eq:xl_grid_value_third_order}
    \flow(s_j)=\alow_j\prod_{k\ne j}(s_j-s_k).
\end{equation}
Combining~\eqref{eq:xl_coefficient_sign_third_order} and~\eqref{eq:xl_grid_value_third_order} gives, for every \(j\) such that \(\alow_j\ne0\),
\begin{equation}
\label{eq:xl_grid_sign_third_order}
    \sign\flow(s_j)=
    \begin{cases}
        (-1)^{\nlow-j+1}, & 1\leq j\leq\mlow,\\
        (-1)^{\nlow-j},   & \mlow<j\leq\nlow.
    \end{cases}
\end{equation}
If \(n\not\equiv5\pmod{6}\), Equation~\eqref{eq:xl_grid_sign_third_order} gives at least \(\nlow-2\) positive roots outside the transition interval.
If \(n\equiv5\pmod{6}\), Equations~\eqref{eq:xl_coefficient_sign_third_order}, \eqref{eq:xl_grid_value_third_order}, and~\eqref{eq:xl_grid_sign_third_order} give at least \(\nlow-3\) roots in the open grid intervals not adjacent to \(s_{\mlow}\), while \(s_{\mlow}\) is itself a root.
Thus, in either case, at least \(\nlow-2\) positive roots have already been accounted for, counting \(s_{\mlow}\) once in the latter case.

Define the midpoint and the corresponding sum by
\begin{equation}
\label{eq:xl_midpoint_quantities_third_order}
    s_{\mathrm{mid}}
    =\frac{s_{\mlow}+s_{\mlow+1}}{2},
    \qquad
    \Phi_{\mathrm L}
    =\sum_{j=1}^{\nlow}
    \frac{2\alow_j}{2s_j-s_{\mlow}-s_{\mlow+1}}.
\end{equation}
With the notation in~\eqref{eq:xl_midpoint_quantities_third_order}, Equation~\eqref{eq:fl_general_third_order} gives
\begin{equation}
\label{eq:xl_midpoint_factorization_third_order}
    \flow(s_{\mathrm{mid}})
    =(1-\Phi_{\mathrm L})
    \prod_{j=1}^{\nlow}(s_{\mathrm{mid}}-s_j).
\end{equation}
As in the upper-block case, by~\eqref{eq:xl_midpoint_factorization_third_order}, it suffices to prove \(\Phi_{\mathrm L}<1\), after which the midpoint sign follows directly.
For \(1\leq j\leq\nlow\), set \(\delta_j=2j\pi/(n+1)=2j\theta\).
The argument used to prove~\eqref{eq:xu_local_estimate_third_order} applies with \(\nupp,\mupp,t_i,\gamma_i\) replaced by \(\nlow,\mlow,s_j,\delta_j\), respectively, and yields
\begin{equation}
\label{eq:xl_local_estimate_third_order}
    \frac{1-2\cos\delta_j}
    {2s_j-s_{\mlow}-s_{\mlow+1}}<\frac{1}{10},
    \qquad 1\leq j\leq\nlow.
\end{equation}
Combining~\eqref{eq:xl_local_estimate_third_order} with Lemma~\ref{lemma:tri_identities_OS} gives
\begin{equation}
\label{eq:xl_phi_bound_third_order}
\begin{aligned}
    \Phi_{\mathrm L}
    &<\frac{64}{5(n+1)}
      \sum_{j=1}^{\nlow}
      \sin^4\frac{\delta_j}{2}\sin^2\delta_j
    =1.
\end{aligned}
\end{equation}
By~\eqref{eq:xl_midpoint_factorization_third_order} and~\eqref{eq:xl_phi_bound_third_order}, the factor \(1-\Phi_{\mathrm L}\) is positive.
Since precisely \(\nlow-\mlow\) factors in \(\prod_{j=1}^{\nlow}(s_{\mathrm{mid}}-s_j)\) are negative,
\begin{equation}
\label{eq:xl_midpoint_sign_third_order}
    \sign\flow(s_{\mathrm{mid}})=(-1)^{\nlow-\mlow}.
\end{equation}

If \(n\not\equiv5\pmod{6}\), Equations~\eqref{eq:xl_grid_sign_third_order} and~\eqref{eq:xl_midpoint_sign_third_order} show that \(\flow(s_{\mathrm{mid}})\) has the sign opposite to those of both \(\flow(s_{\mlow})\) and \(\flow(s_{\mlow+1})\).
Hence the intervals \((s_{\mlow},s_{\mathrm{mid}})\) and \((s_{\mathrm{mid}},s_{\mlow+1})\) each contain a root.
Together with the \(\nlow-2\) roots accounted for by~\eqref{eq:xl_grid_sign_third_order}, these give at least \(\nlow\) positive roots.

It remains to consider \(n\equiv5\pmod{6}\).
Since \(n\geq8\), we have \(\mlow=(n+1)/6\geq2\), while \(\nlow-\mlow=(n-2)/3\) is odd.
Equations~\eqref{eq:xl_coefficient_sign_third_order}, \eqref{eq:xl_grid_value_third_order}, \eqref{eq:xl_grid_sign_third_order}, and~\eqref{eq:xl_midpoint_sign_third_order} give
\begin{equation}
\label{eq:xl_exceptional_signs_third_order}
    \flow(s_{\mlow-1})<0,\qquad
    \flow(s_{\mlow})=0,\qquad
    \flow(s_{\mathrm{mid}})<0,\qquad
    \flow(s_{\mlow+1})>0.
\end{equation}
The signs of \(\flow(s_{\mathrm{mid}})\) and \(\flow(s_{\mlow+1})\) in~\eqref{eq:xl_exceptional_signs_third_order} give a root in \((s_{\mathrm{mid}},s_{\mlow+1})\).
If \(\flow\) is positive at some point of
\[
    (s_{\mlow-1},s_{\mlow})\cup(s_{\mlow},s_{\mathrm{mid}}),
\]
applying the intermediate value theorem between that point and \(s_{\mlow-1}\) in the first interval, or \(s_{\mathrm{mid}}\) in the second, supplies one further root distinct from \(s_{\mlow}\).
Otherwise, \(\flow\leq0\) in a neighborhood of \(s_{\mlow}\).
Since \(\flow(s_{\mlow})=0\), the point \(s_{\mlow}\) is a local maximum and \(\flow'(s_{\mlow})=0\), so \(s_{\mlow}\) has algebraic multiplicity at least two.
Thus, in both the exceptional and nonexceptional cases, \(\flow\) has at least \(\nlow\) positive zeros counted with algebraic multiplicity.
Since \(\deg(\flow)=\nlow\), this proves the assertion for \(\xlow\).
\end{proof}

As a direct consequence of Theorem~\ref{thm:third_order_fufl} and Lemma~\ref{lemma:charac_poly_third_order}, 
we immediately obtain Theorem~\ref{thm:third_order_real_positive}.
Higher-order extensions of the one-sided scheme need not preserve this property; selected counterexamples are described below.

\begin{remark*}
We demonstrate that several high-order one-sided schemes derived from~\eqref{eq:high_order_main_difference_scheme} can produce nonreal eigenvalues.
Providing a rigorous proof of the emergence of nonreal eigenvalues in general higher-order schemes is nontrivial.
However, for certain finite-difference orders and matrix dimensions, this spectral property can be analyzed using the symbolic Sturm sequence~\cite{basuAlgorithmsRealAlgebraic2006}.
For a fixed matrix dimension of $n=50$, Figure~\ref{fig:count_nonreal_eigenvalues} plots the number of nonreal eigenvalues against the stencil width $m=2\ell+1$.
All of the displayed widths with $m>5$ yield nonreal eigenvalues.
Figure~\ref{fig:sixth_order_spurious_eigenvalues} tracks these these numerically computed spurious eigenvalues as $n$ increases for the sixth-order scheme ($m=7$), showing their approximately quadratic growth away from the real axis.
\end{remark*}

\begin{figure}[!tb]
    \centering    \includegraphics[width=0.6\linewidth]{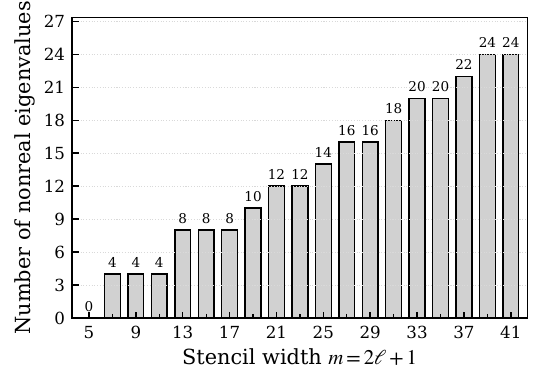}
    \caption{Relationship between the stencil width $m = 2\ell + 1$ and the number of nonreal eigenvalues for the discrete Laplacian associated with discretization~\eqref{eq:high_order_main_difference_scheme} with fixed dimension $n = 50$.}
    \label{fig:count_nonreal_eigenvalues}
\end{figure}

\begin{figure}[!tb]
    \centering
    \begin{minipage}{0.49\linewidth}
        \centering
        \includegraphics[width=\linewidth]{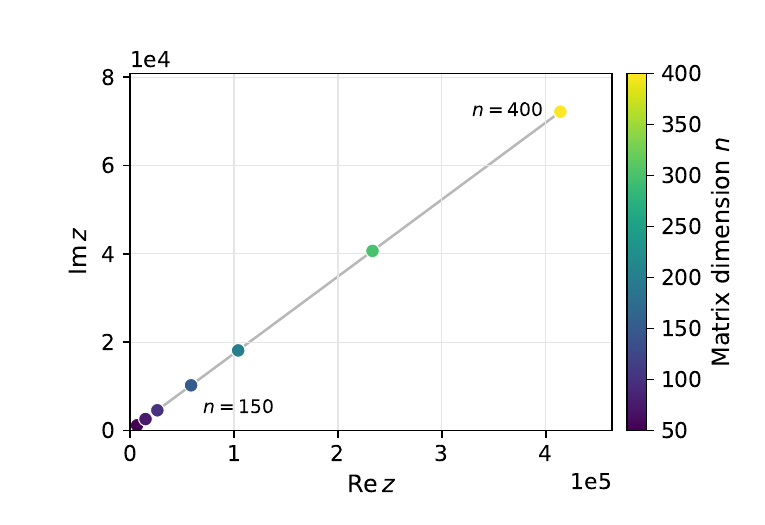}
    \end{minipage}%
    \hfill
    \begin{minipage}{0.49\linewidth}
        \centering
        \includegraphics[width=\linewidth]{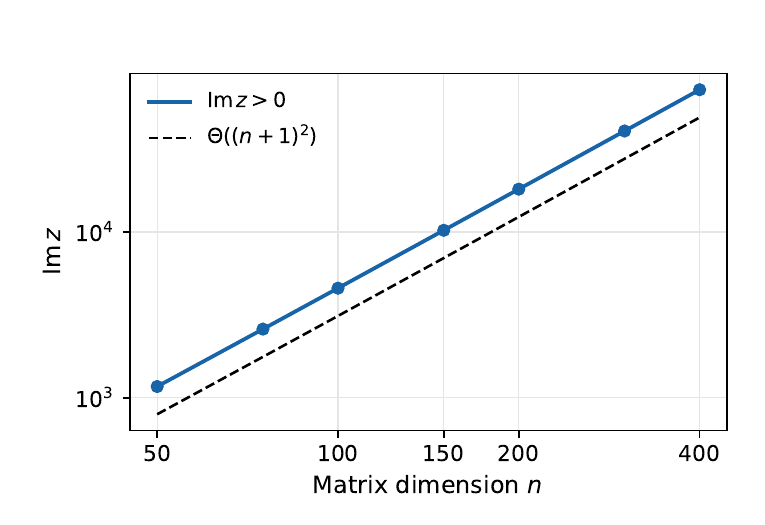}
    \end{minipage}
    \caption{Nonreal eigenvalues of the sixth-order one-sided scheme ($m=7$) for several matrix dimensions. The plotted eigenvalues are those of the original, fully scaled discrete Laplacian. Left: eigenvalues in the upper half-plane, colored by $n$. Right: their positive imaginary parts. Each matrix has two nonreal conjugate pairs, whose positive imaginary parts nearly coincide at this scale.}
    \label{fig:sixth_order_spurious_eigenvalues}
\end{figure}
\FloatBarrier

%% file: tex_src/conclusion.tex
\section{Conclusion}
\label{sec:conclusion}
This work establishes that two fourth-order finite-difference discretizations with Dirichlet boundary conditions, including the Bramble--Hubbard scheme and the one-sided scheme, produce nonsymmetric discrete Laplacians whose eigenvalues are real and strictly positive.
Computer-assisted Sturm sequence computations show that this property can fail for higher-order one-sided schemes.


%% file: tex_src/references.tex
\bibliographystyle{plainurl}
\bibliography{tex_src/reference}

%% file: tex_src/appendix.tex
\appendix\label{sec:appendix}
\section{On the Block Diagonalization Property}
\label{adx:sec:PQR_participate}
In this part, we will show the matrix $PQR$ admits the block-diagonalization, where matrices $P$, $Q$ and $R$ are defined in \eqref{eq:lemma_centro_block_even}, \eqref{eq:lemma_centro_block_odd}, \eqref{eq:orthogonal_eigenmatrix_eigenvalue} and \eqref{eq:permutation_matrix_R} respectively.
The whole computations are divided into two cases based on whether \(n\) is even or odd.
For simplicity for notations, we denote $\theta = \pi/(n+1)$.

\paragraph{Case 1: \( \boldsymbol{n} \) is even.}
We denote $r=n  /2$.
From \eqref{eq:orthogonal_eigenmatrix_eigenvalue}, the $i$-th column of $Q$ is expressed as 
\begin{equation*}
\label{eq:i_th_column_of_Q}
    Q_i =\sqrt{\frac{2}{n+1}}\, \bigl[\sin(i\theta),\sin(2i\theta),\dots, \sin(ni\theta)\bigr]\trans.
\end{equation*}
It leads to, for $1 \leq i \leq n$,
\begin{equation*}
\begin{aligned}
    PQ_i &= \frac{1}{\sqrt{n+1}}\Bigl[\sin(i\theta) + \sin(2ri\theta),\dots,\sin(ri\theta) + \sin((r+1)i\theta),\Bigr.
    \\
         &\hspace{5em}\Bigl.-\sin(ri\theta) + \sin((r+1)i\theta), \dots, -\sin(i\theta)+\sin(2ri\theta)\Bigr]\trans.
\end{aligned}
\end{equation*}
Furthermore, the matrix elements can be simplified by using the following trigonometric identities. 
For \(1 \le j \le r\) and \(1 \le i \le n\), we have
\begin{equation}
\label{eq:simplication_rules_1}
    \sin(ji\theta) + \sin((2r + 1 - j)i\theta)
    =
    \begin{cases}
        2\sin(ji\theta), & \text{if } 2 \nmid i, \\[3pt]
        0,                & \text{if } 2 \mid i,
    \end{cases}
\end{equation}
and
\begin{equation}
\label{eq:simplication_rules_2}
    -\sin(ji\theta) + \sin((2r + 1 - j)i\theta)
    =
    \begin{cases}
        0,                 & \text{if } 2 \nmid i, \\[3pt]
        -2\sin(ji\theta),  & \text{if } 2 \mid i.
    \end{cases}
\end{equation}
Applying these simplification rules to \(PQ R_i = PQ_{2i-1}\) for \(1 \le i \le r\), we obtain
\begin{equation*}
    PQR_i = \frac{2}{\sqrt{n+1}}\Bigl[\sin((2i-1)\theta),\, \sin(2(2i-1)\theta),\, \dots,\, \sin(r(2i-1)\theta),\, \underbrace{0, \dots,\, 0}_{r\text{ zeros}}\,\Bigr]\trans.
\end{equation*}
Similarly, for $1 \leq j \leq r$, the $(r+j)$-th column of $PQR$ can be simplified to
\begin{equation*}
    PQR_{r+j}=PQ_{2j}
    =-\frac{2}{\sqrt{n+1}}\bigl[\,
    \underbrace{0,\dots,0}_{r\text{ zeros}},\,
    \sin(2rj\theta),\,\sin(2(r-1)j\theta),\,\dots,\,\sin(2j\theta)
    \bigr]\trans.
\end{equation*}
Consequently, the matrix $PQR$ can be written as a block-diagonal form,
\begin{equation*}
    \bdP \bdQ \bdR = \begin{bmatrix}
        \bdU_1 & 0 \\
             0 & \bdU_2
    \end{bmatrix},
\end{equation*}
where the two submatrices are 
\begin{equation*}
\label{eq:U_1}
U_1 = \frac{2}{\sqrt{n+1}}\begin{bmatrix}
        \sin\theta & \sin(3\theta) & \dots & \sin((2r-1)\theta) \\
        \sin(2\theta) & \sin(6\theta) & \dots & \sin(2(2r-1)\theta) \\
        \vdots & \vdots & \ddots & \vdots \\
        \sin(r\theta) & \sin(3r\theta) & \dots & \sin(r(2r-1)\theta) 
    \end{bmatrix},
\end{equation*}
and
\begin{equation}
\label{eq:U_2}
U_2 = \frac{-2}{\sqrt{n+1}}\begin{bmatrix}
        \sin(2r\theta)       & \sin(4r\theta)       & \dots & \sin(2r^2\theta) \\
        \sin(2(r-1)\theta)   & \sin(4(r-1)\theta)   & \dots & \sin(2r(r-1)\theta) \\
        \vdots               & \vdots               & \ddots& \vdots \\
        \sin(2\theta)        & \sin(4\theta)        & \dots & \sin(2r\theta)
\end{bmatrix}.
\end{equation}

\paragraph{Case 2: \( \boldsymbol{n} \) is odd.}
In this case, an analogous procedure is applied as in the even-dimensional setting. 
We define \( r = (n - 1)/2 \). 
Through direct computation, we obtain the following expression for \( 1 \le i \le n \):
\begin{equation*}
\begin{aligned}
    PQ_i 
    &= \frac{1}{\sqrt{n + 1}}
    \Bigl[
        \sin(i\theta) + \sin((2r+1)i\theta),\, \dots,\, 
        \sin(ri\theta) + \sin((r + 2)i\theta),\, 
        \sqrt{2}\sin((r + 1)i\theta),\, \\
        &\hspace{4em}
        -\sin(ri\theta) + \sin((r + 2)i\theta),\, \dots,\,
        -\sin(i\theta) + \sin((2r+1)i\theta)
    \Bigr]^{\!\top}.
\end{aligned}
\end{equation*}
Following similar simplification rules as~\eqref{eq:simplication_rules_1} and \eqref{eq:simplication_rules_2} in the even case, we obtain
\begin{equation*}
U_1 = \frac{2}{\sqrt{n + 1}}
\begin{bmatrix}
    \sin\theta & \sin(3\theta) & \dots & \sin((2r + 1)\theta) \\
    \sin(2\theta) & \sin(6\theta) & \dots & \sin(2(2r + 1)\theta) \\
    \vdots & \vdots & \ddots & \vdots \\
    \sin(r\theta) & \sin(3r\theta) & \dots & \sin(r(2r + 1)\theta) \\
    \sin((r + 1)\theta)/\sqrt{2} & 
    \sin(3(r + 1)\theta)/\sqrt{2} & 
    \dots & 
    \sin((r + 1)(2r + 1)\theta)/\sqrt{2}
\end{bmatrix}.
\end{equation*}
and
\begin{equation*}
U_2 = \frac{-2}{\sqrt{n+1}}\begin{bmatrix}
        \sin(2r\theta)       & \sin(4r\theta)       & \dots & \sin(2r^2\theta) \\
        \sin(2(r-1)\theta)   & \sin(4(r-1)\theta)   & \dots & \sin(2r(r-1)\theta) \\
        \vdots               & \vdots               & \ddots& \vdots \\
        \sin(2\theta)        & \sin(4\theta)        & \dots & \sin(2r\theta)
\end{bmatrix}.
\end{equation*}

\section{On the Trigonometric Identities}
\label{adx:sec:tri_identities}

\begin{lemma}
\label{lemma:tri_sum}
For any $x \in \R$ and integer $n \geq 1$, the following identities hold:
\begin{equation}
\sin\frac{x}{2} \cdot \sum_{k=1}^n \sin(kx)
=
\sin\frac{nx}{2} \cdot \sin\frac{(n+1)x}{2},
\end{equation}
and
\begin{equation}
\sin\frac{x}{2} \cdot \sum_{k=1}^n \cos(kx)
=
\sin\frac{nx}{2} \cdot \cos\frac{(n+1)x}{2}.
\end{equation}
\end{lemma}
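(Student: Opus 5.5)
We plan to prove both identities at once by working with the complex exponential sum $\sum_{k=1}^n e^{ikx}$ and taking imaginary and real parts. If $\sin(x/2)=0$, i.e.\ $x\in 2\pi\mathbb{Z}$, then both left-hand sides vanish. The right-hand sides also vanish, because $\sin(nx/2)=\sin(n\pi m)=0$ when $x=2\pi m$. We may therefore assume $\sin(x/2)\neq 0$, so that $e^{ix}\neq 1$.

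\textbf{Step 1: factor the geometric sum.} Summing the geometric series gives
\[
\sum_{k=1}^n e^{ikx}=e^{ix}\frac{e^{inx}-1}{e^{ix}-1}.
\]
We next pull out half-angle phases, writing $e^{inx}-1=e^{inx/2}\cdot 2i\sin(nx/2)$ and $e^{ix}-1=e^{ix/2}\cdot 2i\sin(x/2)$. This yields
\[
\sin\frac{x}{2}\sum_{k=1}^n e^{ikx}=e^{i(n+1)x/2}\,\sin\frac{nx}{2}.
\]

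\textbf{Step 2: separate real and imaginary parts.} Both $\sin(x/2)$ and $\sin(nx/2)$ are real. Taking imaginary parts of the identity in Step~1 gives the sine identity, and taking real parts gives the cosine identity. Since both sides of the final identities are continuous in $x$, the excluded points $x\in 2\pi\mathbb{Z}$ could alternatively be recovered by continuity.

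\textbf{Alternative route and main care point.} Another approach is telescoping with product-to-sum formulas:
\[
2\sin\tfrac{x}{2}\sin(kx)=\cos\bigl((k-\tfrac12)x\bigr)-\cos\bigl((k+\tfrac12)x\bigr),
\]
\[
2\sin\tfrac{x}{2}\cos(kx)=\sin\bigl((k+\tfrac12)x\bigr)-\sin\bigl((k-\tfrac12)x\bigr).
\]
Summing over $k$ telescopes to $\cos(x/2)-\cos((n+\tfrac12)x)$ and $\sin((n+\tfrac12)x)-\sin(x/2)$, respectively. Sum-to-product formulas then convert these to $2\sin(nx/2)\sin((n+1)x/2)$ and $2\sin(nx/2)\cos((n+1)x/2)$. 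This route avoids any division and needs no special case. No step is a genuine obstacle; the only care needed is to track the half-angle phases and signs correctly.
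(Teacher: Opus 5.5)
Your proposal is correct and complete. The paper gives no proof to compare against: it states this lemma without argument as a classical identity and uses it only to evaluate the sums $C_r$ in the proof of Lemma~\ref{lemma:tri_identities_BH}.

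Both of your routes check out.

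\textbf{Exponential route.} The phase bookkeeping $e^{ix}\cdot e^{inx/2}e^{-ix/2}=e^{i(n+1)x/2}$ is right. You handle the degenerate case $x\in 2\pi\mathbb{Z}$ directly: there $\sin\frac{nx}{2}=0$ as well. So your closing appeal to continuity is redundant, though harmless. This route gives both identities from a single computation, but it needs that case split because it divides by $e^{ix}-1$.

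\textbf{Telescoping route.} The product-to-sum formulas $2\sin\tfrac{x}{2}\sin(kx)=\cos\bigl((k-\tfrac12)x\bigr)-\cos\bigl((k+\tfrac12)x\bigr)$ and $2\sin\tfrac{x}{2}\cos(kx)=\sin\bigl((k+\tfrac12)x\bigr)-\sin\bigl((k-\tfrac12)x\bigr)$ are correct. So are the sum-to-product conversions, with half-sum $(n+1)x/2$ and half-difference $nx/2$. This route is division-free and valid uniformly for all real $x$. It also produces exactly the stated form, with $\sin\frac{x}{2}$ kept on the left, which is presumably why the lemma is written multiplied through rather than as a quotient.
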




\begin{lemma}
\label{lemma:tri_identities_BH}
Let $n \geq 3$ be an integer.
Then the following identity hold:
\begin{equation*}
\begin{aligned}
\sum_{i=1}^{\lceil n/2\rceil}\sin^2\frac{(2i-1)\pi}{2n+2} \sin^2\frac{(2i-1)\pi}{n+1}=
\sum_{j=1}^{\lfloor n/2\rfloor}\sin^2\frac{j\pi}{n+1} \sin^2\frac{2\pi j}{n+1}=\frac{n+1}{8}.
\end{aligned}
\end{equation*}
\end{lemma}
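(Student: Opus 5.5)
The plan is to prove the two sums separately, each by reducing the product of squared sines to a short linear combination of cosines and then summing with Lemma~\ref{lemma:tri_sum}. Write $\theta=\pi/(n+1)$. For the second sum I use $\sin^2 x=(1-\cos 2x)/2$ to obtain
\begin{equation*}
\sin^2(j\theta)\sin^2(2j\theta)=\tfrac14\bigl(1-\cos 2j\theta\bigr)\bigl(1-\cos 4j\theta\bigr)
=\tfrac14\Bigl(1-\cos 2j\theta-\cos 4j\theta+\tfrac12\cos 2j\theta+\tfrac12\cos 6j\theta\Bigr),
\end{equation*}
which simplifies to $\tfrac14\bigl(1-\tfrac12\cos 2j\theta-\cos 4j\theta+\tfrac12\cos 6j\theta\bigr)$. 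For the first sum, with $\gamma_i=(2i-1)\theta$, the summand is $\sin^2(\gamma_i/2)\sin^2\gamma_i=\tfrac14(1-\cos\gamma_i)(1-\cos 2\gamma_i)$. This expands to $\tfrac14\bigl(1-\tfrac12\cos\gamma_i-\cos 2\gamma_i+\tfrac12\cos 3\gamma_i\bigr)$.

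The key step is to evaluate sums of the form $\sum_j\cos(2kj\theta)$ over $1\le j\le\lfloor n/2\rfloor$, and $\sum_i\cos(k(2i-1)\theta)$ over $1\le i\le\lceil n/2\rceil$, for $k=1,2,3$. Rather than use Lemma~\ref{lemma:tri_sum} with separate parity cases, I will exploit symmetry. Over the full range $m=1,\dots,n$ one has $\sum_{m=1}^{n}\cos(km\theta)=-\tfrac12\bigl(1+(-1)^k\bigr)$ for $1\le k\le 2n+1$, which follows from Lemma~\ref{lemma:tri_sum} or from roots of unity. The reflection $m\mapsto n+1-m$ maps even indices to odd indices when $n$ is even, and preserves parity when $n$ is odd. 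It sends $\cos(km\theta)$ to $(-1)^k\cos(km\theta)$.

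The cleanest route is probably to combine the two sums. Their union indexes all $m=1,\dots,n$, with summand $\sin^2(m\theta/2)\sin^2(m\theta)$ on odd $m$ and $\sin^2(m\theta/2)\sin^2(m\theta)$ with $m=2j$ giving $\sin^2(j\theta)\sin^2(2j\theta)$ on even $m$. The combined summand is therefore $\sin^2(m\theta/2)\sin^2(m\theta)$ in both cases, although the even-$m$ term, read with $m=2j$, is $\sin^2(m\theta/2)\sin^2(m\theta)$. It suffices to show that the total equals $(n+1)/4$ and that the odd-indexed and even-indexed parts are equal.

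The total is $\tfrac14\sum_{m=1}^n\bigl(1-\tfrac12\cos m\theta-\cos 2m\theta+\tfrac12\cos 3m\theta\bigr)$, which equals $\tfrac14\bigl(n-0+1+0\bigr)=\tfrac{n+1}{4}$. For equality of the two halves I would compute the alternating sum $\sum_m(-1)^m\cos(km\theta)=\sum_m\cos\bigl(km\theta+m\pi\bigr)$. This is again a full geometric sum with shifted frequency $k\theta+\pi=(k+n+1)\theta$. It evaluates by the same formula to $-\tfrac12\bigl(1+(-1)^{k+n+1}\bigr)$, and the plain sum $\sum_m(-1)^m$ contributes $-\tfrac12\bigl(1-(-1)^{n}\bigr)$. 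Weighting by the coefficients $1,-\tfrac12,-1,\tfrac12$ for $k=0,1,2,3$ should give zero. The $n$-parity terms cancel because the coefficients of odd $k$ sum to zero, and the coefficients of even $k$, namely $1$ and $-1$, also sum to zero.

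The main obstacle is bookkeeping in this alternating-sum step, in particular checking the range condition $k+n+1\le 2n+1$ (valid since $k\le3\le n$, using $n\ge3$) so that no frequency becomes a multiple of $2\pi$. Once the alternating sum vanishes, each half equals $(n+1)/8$.
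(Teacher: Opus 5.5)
Your proof is correct, and it takes a genuinely different route from the paper's.

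The paper expands the summand as $\tfrac18(\cos 3x-2\cos 2x-\cos x+2)$ and evaluates each half-sum separately. It sums cosines over the odd multiples $(2i-1)\theta$ with Lemma~\ref{lemma:tri_sum}, giving $C_r=\sin(2mr\theta)/(2\sin r\theta)$. It then splits on the parity of $n$, since $2\lceil n/2\rceil\theta$ equals $\pi-\theta$ or $\pi$. The even-index sum is dismissed as ``similar''.

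You instead note that the two sums are the odd- and even-indexed parts of the single sum $\sum_{m=1}^{n}\sin^2(m\theta/2)\sin^2(m\theta)$. You get the total $(n+1)/4$ from the full-range identity $\sum_{m=1}^{n}\cos(km\theta)=-\tfrac12\bigl(1+(-1)^k\bigr)$. You show the difference vanishes via the shift $(-1)^m\cos(km\theta)=\cos\bigl((k+n+1)m\theta\bigr)$. The cancellation reduces to $\sum_k c_k=\sum_k(-1)^k c_k=0$ for the coefficients $(1,-\tfrac12,-1,\tfrac12)$, and that checks out.

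What your route buys:
\begin{itemize}
\item Both sums are handled at once, with no case split on the parity of $n$.
\item It shows exactly where $n\ge 3$ enters: the shifted frequency $k+n+1$ with $k=3$ must stay below $2n+2$. For $n=2$ the total is still $3/4$, but the halves are $3/16$ and $9/16$.
\item It carries over verbatim to Lemma~\ref{lemma:tri_identities_OS}. There the summand is $\tfrac18\bigl(\tfrac54-\cos x-\cos 2x+\cos 3x-\tfrac14\cos 4x\bigr)$, both weighted coefficient sums vanish again, and the requirement $k=4\le n$ is exactly that lemma's hypothesis $n\ge 4$.
\end{itemize}
The paper's computation is more hands-on and uses its stated Lemma~\ref{lemma:tri_sum} directly.

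The reflection remark $m\mapsto n+1-m$ in your second paragraph plays no role in the final argument and can be dropped. The sentence about the ``combined summand'' is redundant and should be stated once: both sums are $\sum \sin^2(m\theta/2)\sin^2(m\theta)$ over odd and even $m\le n$, respectively.
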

\begin{proof}
Using the identities
\[
\sin^2\frac{x}{2}=\frac{1-\cos x}{2},
\qquad
\sin^2x=\frac{1-\cos 2x}{2},
\qquad
\cos x\cos 2x=\frac{\cos 3x+\cos x}{2},
\]
we obtain, for any $x\in\R$,
\begin{equation}
\label{eq:general_identity}
\sin^2\frac{x}{2}\,\sin^2x
=
\frac18\bigl(\cos(3x)-2\cos(2x)-\cos x+2\bigr).
\end{equation}

We first prove
\begin{equation}
\label{eq:tri_identity_BH_1}
\sum_{i=1}^{\lceil n/2\rceil}\sin^2\frac{(2i-1)\pi}{2n+2}\sin^2\frac{(2i-1)\pi}{n+1}
=
\frac{n+1}{8}.
\end{equation}
Set \(m=\lceil n/2 \rceil\) and \(\theta=\pi/(n+1)\).
Substituting $x=(2i-1)\theta$ into~\eqref{eq:general_identity} and summing over $i=1,\dots,m$, we obtain
\begin{equation}
\label{eq:first_sum_expand}
\sum_{i=1}^{m}\sin^2\frac{(2i-1)\theta}{2}\sin^2\bigl((2i-1)\theta\bigr)
=
\frac18\bigl(C_3-2C_2-C_1+2m\bigr),
\end{equation}
where
\[
C_r:=\sum_{i=1}^{m}\cos\bigl(r(2i-1)\theta\bigr), \qquad r=1,2,3.
\]
By Lemma~\ref{lemma:tri_sum},
\[
\sum_{i=1}^{m}\cos\bigl((2i-1)t\bigr)
=
\sum_{k=1}^{2m}\cos(kt)-\sum_{k=1}^{m}\cos(2kt)
=
\frac{\sin(2mt)}{2\sin t}.
\]
Hence we derive \(C_r=\sin(2mr\theta)/(2\sin(r\theta))\).
Now, since $m=\lceil n/2\rceil$, we have
\[
2m\theta=
\begin{cases}
n\theta=\pi-\theta, & n \text{ even},\\[2mm]
(n+1)\theta=\pi, & n \text{ odd}.
\end{cases}
\]
Therefore, for $n \geq 3$,
\[
C_1=
\begin{cases}
1/2, & n \text{ even},\\
0, & n \text{ odd},
\end{cases}
\qquad
C_2=
\begin{cases}
-1/2, & n \text{ even},\\
0, & n \text{ odd},
\end{cases}
\qquad
C_3=
\begin{cases}
1/2, & n \text{ even},\\
0, & n \text{ odd}.
\end{cases}
\]
Substituting these into~\eqref{eq:first_sum_expand}, in both cases we obtain~\eqref{eq:tri_identity_BH_1}.
For the second sum, the proof is similar.
\end{proof}

\begin{lemma}
\label{lemma:tri_identities_OS}
Let $n \geq 4$ be an integer.
Then the following identity hold:
\begin{equation*}
\begin{aligned}
\sum_{i=1}^{\lceil n/2\rceil}\sin^4\frac{(2i-1)\pi}{2n+2} \sin^2\frac{(2i-1)\pi}{n+1}=
\sum_{j=1}^{\lfloor n/2\rfloor}\sin^4\frac{j\pi}{n+1} \sin^2\frac{2\pi j}{n+1}=\frac{5n+5}{64}.
\end{aligned}
\end{equation*}
\end{lemma}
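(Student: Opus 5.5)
We will mirror the proof of Lemma~\ref{lemma:tri_identities_BH}: first expand the summand into a short cosine polynomial, then evaluate each cosine sum over the odd (respectively even) multiples of $\theta=\pi/(n+1)$ in closed form using Lemma~\ref{lemma:tri_sum}.

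\textbf{Step 1: cosine expansion of the summand.} Using $\sin^2(x/2)=(1-\cos x)/2$ and $\sin^2 x=(1-\cos 2x)/2$, we have
\begin{equation*}
\sin^4\frac{x}{2}\sin^2 x=\frac{(1-\cos x)^2(1-\cos 2x)}{8}.
\end{equation*}
Expanding and applying product-to-sum formulas should give an identity of the form
\begin{equation*}
\sin^4\frac{x}{2}\sin^2x=\frac{1}{32}\bigl(10-8\cos x-7\cos 2x+6\cos 3x-\cos 4x\bigr),
\end{equation*}
which we would double-check at $x=0$ and $x=\pi$. At $x=0$ the right-hand side is $0$. At $x=\pi$ it is $(10+8-7-6-1)/32=4/32$, while the left-hand side is $1\cdot 0=0$. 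This mismatch shows the constants above are not yet right, so they must be recomputed carefully. The structure, however, is what matters: a constant term $c_0/32$ plus a combination $\sum_{r=1}^{4} c_r\cos(rx)$.

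\textbf{Step 2: summing over the odd nodes.} Set $m=\lceil n/2\rceil$ and sum over $x=(2i-1)\theta$. By the computation in the proof of Lemma~\ref{lemma:tri_identities_BH}, $C_r=\sin(2mr\theta)/(2\sin(r\theta))$.
\begin{itemize}
\item For $n$ odd, $2m\theta=\pi$, so $C_r=0$ whenever $\sin(r\theta)\neq0$. This holds for $r\le 4$ as long as $4\theta<\pi$, i.e.\ $n\ge4$, which is where the hypothesis $n\ge 4$ enters.
\item For $n$ even, $2m\theta=\pi-\theta$, so $\sin(r(\pi-\theta))=(-1)^{r+1}\sin(r\theta)$ and hence $C_r=(-1)^{r+1}/2$.
\end{itemize}
The total is then $\frac{1}{32}\bigl(c_0 m+\sum_r c_r C_r\bigr)$. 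For $n$ odd this is $c_0(n+1)/64$, so we need $c_0=5$. For $n$ even we need $\frac{1}{32}\bigl(c_0 n/2+\tfrac12\sum_r(-1)^{r+1}c_r\bigr)=5(n+1)/64$, i.e.\ $\sum_r(-1)^{r+1}c_r=5$. This is equivalent to the value of the expansion at $x=\pi$ being $0$, i.e.\ $c_0-\sum_r(-1)^{r+1}c_r=0$, which holds automatically since $\sin\pi=0$. So the consistency check reduces to computing $c_0$ correctly. Averaging $(1-\cos x)^2(1-\cos2x)/8$ over a period gives $\frac18\bigl(\tfrac32-\langle\cos^2x\cos2x\rangle\bigr)=\frac18\bigl(\tfrac32-\tfrac14\bigr)=\tfrac{5}{32}$, so $c_0=5$, as required.

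\textbf{Step 3: the even nodes.} The second sum over $x=2j\theta$, $j\le\lfloor n/2\rfloor$, is handled the same way. The sums $\sum_{j=1}^{\lfloor n/2\rfloor}\cos(2jr\theta)$ follow from Lemma~\ref{lemma:tri_sum} with step $2\theta$; they equal $-1/2$ for $n$ odd and alternate in sign for $n$ even, by the analogous endpoint analysis. Again the $n\ge4$ condition ensures $\sin(r\theta)\ne0$. The total then reduces to $5(n+1)/64$ using $c_0=5$ and the vanishing of the expansion at $x=0$ or $x=\pi$.

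The main obstacle is purely bookkeeping: getting the exact coefficients $c_r$ of the cosine expansion and the parity-dependent signs of the $C_r$ right. I would verify the final identity numerically for $n=4,5$ before writing it up.
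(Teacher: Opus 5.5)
Your strategy is the paper's: expand $\sin^4\frac{x}{2}\sin^2x$ into cosines and evaluate the node sums with Lemma~\ref{lemma:tri_sum}. Your Step~2 shortcut is sound. The summand is $f(x)=\frac{1}{32}\bigl(c_0+\sum_{r=1}^{4}c_r\cos rx\bigr)$ with $c_0=5$ (your averaging is correct) and $f(\pi)=0$, so the odd-node sum is $5(n+1)/64$ without knowing $c_1,\dots,c_4$. For the record, $f(x)=\frac{1}{32}(5-4\cos x-4\cos 2x+4\cos 3x-\cos 4x)$; delete the incorrect formula from Step~1 rather than leave it in.

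\textbf{The error is in Step~3.} The values you assert for $D_r=\sum_{j=1}^{k}\cos(2jr\theta)$, $k=\lfloor n/2\rfloor$, have the parities swapped. Lemma~\ref{lemma:tri_sum} gives $D_r=\frac{\sin((2k+1)r\theta)}{2\sin(r\theta)}-\frac12$.
\begin{itemize}
\item For $n$ even, $(2k+1)\theta=\pi$, so $D_r=-\frac12$ for every $r$.
\item For $n$ odd, $(2k+1)\theta=\pi-\theta$, so $D_r=\frac{(-1)^{r+1}-1}{2}$, i.e.\ $0$ for odd $r$ and $-1$ for even $r$. For example, $n=5$ gives $D_1=\cos\frac{\pi}{3}+\cos\frac{2\pi}{3}=0$.
\end{itemize}
If you plug in your value $D_r=-\frac12$ for odd $n$, the even-node sum becomes $\frac{1}{32}\bigl(\frac{5(n-1)}{2}+\frac52\bigr)=\frac{5n}{64}$. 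That is wrong: for $n=5$ the actual sum is $\frac{3}{64}+\frac{27}{64}=\frac{30}{64}$. So Step~3 as written does not prove the second identity.

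To repair it, use the correct $D_r$.
\begin{itemize}
\item For $n$ even, the sum is $\frac{1}{32}\bigl(\frac{5n}{2}-\frac12\sum_r c_r\bigr)$, and $f(0)=0$ gives $\sum_r c_r=-5$.
\item For $n$ odd, the sum is $\frac{1}{32}\bigl(\frac{5(n-1)}{2}+\frac12\sum_r(-1)^{r+1}c_r-\frac12\sum_r c_r\bigr)$. Here you need \emph{both} $f(\pi)=0$ and $f(0)=0$ (not ``or'') to reach $\frac{1}{32}\bigl(\frac{5(n-1)}{2}+5\bigr)=\frac{5(n+1)}{64}$.
\end{itemize}
A slicker alternative uses the same method over all nodes. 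Since $\sum_{l=1}^{n}\cos(rl\theta)=\frac{(-1)^{r+1}-1}{2}$ for $1\le r\le 4$, you get $\sum_{l=1}^{n}f(l\theta)=\frac{5(n+1)}{32}$. Subtracting the odd-node sum from Step~2 then gives the even-node sum directly, with no separate parity analysis.
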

\begin{proof}
    Similar to Lemma~\ref{lemma:tri_identities_BH}.
\end{proof}

\section{On the Estimates Involving Trigonometric Functions}
\label{adx:sec:Details in 
the proof of Lemma}
In this part, we will prove the estimate: for \(1 \le i \le \nupp\):
\begin{equation}
\label{eq:xu_local_estimate_2_appendix}
    \frac{1 - 2\cos\gamma_i}{2t_i - t_{\mupp} - t_{\mupp+1}} 
    < \frac{1}{10},
\end{equation}
where $\gamma_i = (2i-1)\pi / (n+1)$, $\nupp = \lceil n/2 \rceil$, $\mupp = \lfloor (n+4)/6 \rfloor$ and $t_i$ is defined in~\eqref{eq:xu_grid_point_third_order}.
We observe that the denominator of the left-hand side term in \eqref{eq:xu_local_estimate_2_appendix} is negative for \(1 \leq i \leq \mupp\) and positive for \(\mupp < i \leq \nupp\).
By multiplying both sides by the denominator and expressing \(t_i\) as a quadratic polynomial in \(\cos\gamma_i\), the estimate \eqref{eq:xu_local_estimate_2_appendix} reduces to proving the quadratic polynomial
\begin{equation}
\label{eq:xu_quadratic_polynomial}
    g(z_i) = -8z_i^2 + 44z_i-46+t_{\mupp}+t_{\mupp+1}
\end{equation}
where \(z_i=\cos\gamma_i\), is positive for \(1 \leq i \leq \mupp\) and negative for \(\mupp < i \leq \nupp\).
By monotonicity, it suffices to check the boundary indices \(i=\mupp\) and \(i=\mupp+1\), namely \(g(z_{\mupp})>0\) and \(g(z_{\mupp+1})<0\).

Following the definition of the grid point \(t_i\) in \eqref{eq:xu_grid_point_third_order},
\begin{equation*}
    \begin{aligned}
    t_{\mupp} &= 4z_{\mupp}^2 - 32z_{\mupp} + 28 , \\
    t_{\mupp+1} &= 4z_{\mupp+1}^2 - 32z_{\mupp+1} + 28, \\
    \end{aligned}
\end{equation*}
substituting these into \eqref{eq:xu_quadratic_polynomial}, we obtain
\begin{equation*}
    \label{eq:g(z_mu_estimate)}
    \begin{aligned}
        g(z_{\mupp}) &= -4z_{\mupp}^2 + 4z_{\mupp+1}^2 + 12z_{\mupp} - 32z_{\mupp+1} + 10 \\
                     &=  (\underbrace{z_{\mupp} - z_{\mupp+1}}_{>0}) 
                     (\underbrace{12 - 4z_{\mupp} - 4z_{\mupp+1}}_{>0}) + 10(\underbrace{1-2z_{\mupp+1}}_{>0}) \\
                     & >0. 
    \end{aligned}
\end{equation*}
Similarly,
\begin{equation*}
    \begin{aligned}
        g(z_{\mupp+1}) &= -4z_{\mupp+1}^2 + 4z_{\mupp}^2 + 12z_{\mupp+1} - 32z_{\mupp} + 10 \\
                     &=  (\underbrace{z_{\mupp+1} - z_{\mupp}}_{<0}) 
                     (\underbrace{12 - 4z_{\mupp} - 4z_{\mupp+1}}_{>0}) + 10(\underbrace{1-2z_{\mupp}}_{\leq 0}) \\
                     & <0.
                     \\  
    \end{aligned}
\end{equation*}
This completes the proof of estimate \eqref{eq:xu_local_estimate_2_appendix}. 
For \(n\geq8\), the same argument, with \(\nupp,\mupp,t_i,\gamma_i\) replaced by \(\nlow,\mlow,s_j,\delta_j\), respectively, proves~\eqref{eq:xl_local_estimate_third_order}.

\section{\texorpdfstring{Computer-Assisted Exact Verifications}%
{Computer-Assisted Exact Verifications}}
\label{adx:sec:computer_assisted_exact_verification}
The exact arithmetic Python scripts supporting the computer-assisted
statements in this paper are available in \href{https://github.com/aaduiduidui/fourth-order-laplacian-code}{https://github.com/aaduiduidui/fourth-order-laplacian-code}.
They use exact integer, rational, or algebraic number arithmetic; no
floating point eigenvalue computation is used.
The repository provides
reproducible verifications of the following statements.
\begin{enumerate}
    \item For \(n\in\{5,6,7\}\), the upper  and lower block
    characteristic polynomial formulas in Lemma~\ref{lemma:charac_poly_BH}
    are checked coefficient by coefficient in exact algebraic number fields.

    \item For \(n\in\{5,6,7\}\), the upper and lower block
    characteristic polynomial formulas in
    Lemma~\ref{lemma:charac_poly_third_order} are checked coefficient by
    coefficient in exact algebraic number fields.

    \item For the fourth-order one-sided scheme with
    \(n\in\{5,6,7\}\), exact Sturm sequence computations verify that the
    upper and lower characteristic polynomials have \(\nupp\) and \(\nlow\)
    distinct roots in \((0,\infty)\), respectively, thereby verifying the
    corresponding small dimensional cases of
    Theorem~\ref{thm:third_order_fufl}.

    \item For \(n=50\) and \(m=5,7,\ldots,41\), exact
    characteristic polynomials and Sturm sequence root counts reproduce the
    numbers of nonreal eigenvalues reported in
    Figure~\ref{fig:count_nonreal_eigenvalues}.
\end{enumerate}